\documentclass[12pt]{amsart}
\usepackage[utf8]{inputenc}

\usepackage{lmodern,amsmath,amssymb,mathtools,microtype}
\usepackage[a4paper,margin=23mm]{geometry}
\usepackage{booktabs,array,xurl}
\usepackage[hidelinks]{hyperref}
\hypersetup{pdftitle={On the (n-2)-linear restriction estimate in n dimensions},pdfauthor={Faruk Temur}}

\newtheorem{theorem}{Theorem}[section]
\newtheorem{proposition}[theorem]{Proposition}
\newtheorem{lemma}[theorem]{Lemma}
\newtheorem{corollary}[theorem]{Corollary}
\theoremstyle{definition}
\newtheorem{external}{External input}
\theoremstyle{remark}
\newtheorem{remark}[theorem]{Remark}
\numberwithin{equation}{section}
\newcommand{\R}{\mathbb R}
\newcommand{\supp}{\operatorname{supp}}
\newcommand{\dist}{\operatorname{dist}}
\newcommand{\spn}{\operatorname{span}}
\newcommand{\ang}{\operatorname{ang}}
\newcommand{\gap}{\operatorname{gap}}
\newcommand{\Rap}{\operatorname{RapDec}}
\newcommand{\dd}{\,d}
\newcommand{\one}{\mathbf1}
\newcommand{\Nbh}{\mathcal N}
\newcommand{\T}{\mathcal T}
\newcommand{\pc}{p_{\mathrm c}}
\newcommand{\qg}{q_0}
\newcommand{\Pf}{\mathcal P(f)}
\allowdisplaybreaks[2]
\title[The $(n-2)$-linear restriction estimate]{The $(n-2)$-linear restriction estimate in $\mathbb R^n$}
\author{Faruk Temur}
\date{October 10, 2026.}
\subjclass[2020]{Primary 42B10; Secondary 42B25, 14P10}
\keywords{Fourier restriction, multilinear estimates, curvature, polynomial partitioning, wave packets}
\begin{document}
\begin{abstract}
We present a proof  for the $(n-2)$-linear extension estimate  for curved surfaces in $\R^n$, $n\ge5$, at the local product exponent $2(n-1)/(n-2)^2$, with an arbitrarily small power loss. The hypotheses are transversality of the normals and nondegeneracy of the compressed shape operators on their common two-dimensional orthogonal complement. The argument combines polynomial partitioning, a descent through algebraic varieties, localized multilinear restriction, and positive packet-energy estimates. 
\end{abstract}
\maketitle

\section{Introduction}

The Fourier restriction problem asks when the Fourier transform of a function can be restricted to a set of Lebesgue measure zero. In its dual form it asks for integrability estimates for oscillatory integrals over that set. The Tomas--Stein theorem is the basic example of an estimate in which curvature improves integrability; see \cite{Tomas,Stein}.

The bilinear theory separates frequency pieces and exploits the interaction of their extensions. Wolff's theorem for the cone \cite{Wolff} and Tao's theorem for the paraboloid \cite{Tao} established sharp ranges apart from endpoints in their respective settings. Bennett, Carbery, and Tao \cite{BCT} developed the corresponding multilinear restriction and Kakeya theory. For $k$ transverse hypersurfaces in $\R^n$, their local product exponent is $2/(k-1)$, with an arbitrarily small power loss in the radius.

Under suitable curvature hypotheses the expected exponent is smaller:
\begin{equation}\label{generalexponent}
 p(n,k)=\frac{2(n+k)}{k(n+k-2)}.
\end{equation}
Bejenaru proved sharp estimates apart from endpoints for certain classes of curved hypersurfaces \cite{BejMulti}, and subsequently treated $k=n-1$ in a general geometric setting described by the shape operator \cite{BejCurv}.  The present draft concerns the next case $k=n-2$, and uses ideas and tools from \cite{BejCurv}.

Throughout the main argument,
\begin{equation}\label{dimensions}
 n\ge5,\qquad k=n-2,\qquad
 \pc=\frac{2(k+1)}{k^2}=\frac{2(n-1)}{(n-2)^2},\qquad
 \qg=\frac2{k-1}.
\end{equation}
In particular, $2/k<\pc<\qg\le1$. The exponents refer to the product of the $k$ extensions, not its $k$th root. Some values are
\[
\begin{array}{ccccc}
\toprule
n&5&6&7&8\\
\midrule
k&3&4&5&6\\
\pc&8/9&5/8&12/25&7/18\\
\qg&1&2/3&1/2&2/5\\
\bottomrule
\end{array}
\]
The restriction $n\ge5$ keeps all exponents used in the partition argument at most one. The value of \eqref{generalexponent} at $(n,k)=(4,2)$ is $3/2$, the corresponding bilinear critical exponent.

We use the algebraic tools of Guth \cite{Guth}, the localized restriction theorem of Bejenaru \cite{BejLoc} in the formulation of Beltran, Duncan, and Hickman \cite{BDH}, and the regularized Kakeya--Brascamp--Lieb theorem of Maldague \cite{Maldague}. The relevant statements are given in Section~\ref{externalinputs}. The article is self-contained relative to these external theorems; their deep proofs are not reproduced.

Let $S_1,\ldots,S_k$ be smooth compact hypersurface patches in $\R^n$, each lying in the interior of a fixed smooth extension. Put
\begin{equation}\label{extension}
 E_if(x)=\int_{S_i}e^{ix\cdot\zeta}f(\zeta)\dd\sigma_i(\zeta),
 \qquad i=1,\ldots,k.
\end{equation}
Write $N_i$ for a smooth unit normal and $A_i=-dN_i$ for the shape operator. We identify tangent spaces with linear subspaces of $\R^n$.

We assume, throughout the fixed extended patches,
\begin{equation}\label{trans}
 |N_1(\zeta_1)\wedge\cdots\wedge N_k(\zeta_k)|\ge\nu>0
\end{equation}
for every choice of frequency points. Define
\begin{equation}\label{Wdef}
 W=W(\zeta_1,\ldots,\zeta_k)
 =\spn\{N_1(\zeta_1),\ldots,N_k(\zeta_k)\}^{\perp}.
\end{equation}
Then $\dim W=2$ and $W\subset T_{\zeta_i}S_i$. Our curvature hypothesis is
\begin{equation}\label{curv}
 |\pi_W A_i(\zeta_i)v|\ge\kappa|v|
 \qquad(v\in W,\ i=1,\ldots,k),
\end{equation}
with $\kappa>0$ fixed. Thus the compression of each shape operator to $W$ is invertible, uniformly in the entire frequency tuple.

\begin{theorem}\label{main}
Under \eqref{trans} and \eqref{curv}, we have
\begin{equation}\label{localmain}
 \left\|\prod_{i=1}^kE_if_i\right\|_{L^{\pc}(B_R)}
 \le C_\varepsilon R^\varepsilon\prod_{i=1}^k\|f_i\|_{L^2(S_i)},
 \qquad R\ge2,
\end{equation}
for every $\varepsilon>0$, every ball $B_R\subset\R^n$, and arbitrary $L^2$ inputs. Together with epsilon removal this gives
\begin{equation}\label{globalmain}
 \left\|\prod_{i=1}^kE_if_i\right\|_{L^p(\R^n)}
 \le C_p\prod_{i=1}^k\|f_i\|_{L^2(S_i)},
 \qquad p>\pc.
\end{equation}
The constants may depend on $n$ and the fixed geometry.
\end{theorem}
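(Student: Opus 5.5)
We prove \eqref{localmain} by induction on the radius $R$ via Guth's polynomial partitioning, in the form used by Bejenaru for the $(n-1)$-linear problem \cite{BejCurv}. The global estimate \eqref{globalmain} then follows from standard $\varepsilon$-removal for multilinear extension operators. Let $Q(R)$ be the best constant in \eqref{localmain} over all inputs normalized by $\|f_i\|_{L^2}=1$. We decompose each $f_i$ into wave packets $\phi_{T}$ adapted to tubes $T$ of radius $R^{1/2}$ and length $R$ in direction $N_i(\zeta_T)$. We then choose a polynomial $P$ of degree $D=D(\varepsilon)$ whose zero set $Z(P)$ equipartitions the mass $\int_{B_R}|\prod_i E_if_i|^{\pc}$ into $\sim D^n$ cells. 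Three cases arise.
\begin{itemize}
\item \emph{Cellular case.} Each tube enters at most $D+1$ cells, so the $L^2$ mass is distributed. Induction on each cell gives a gain $D^{-n}\cdot D^{(n-1)\cdot k\pc/2}\cdot\ldots$, which closes provided the cellular exponent balance $\pc\ge 2(n-1)/\ldots$ is in the right direction. This numerology is exactly what pins $\pc=2(k+1)/k^2$: the relevant exponents stay $\le1$ when $n\ge5$, which makes H\"older in the cell sum legitimate.
\item \emph{Transverse wall case.} Tubes crossing the $R^{1/2}$-neighbourhood of $Z(P)$ transversally meet few cells and are handled by induction on scales at radius $R/2$ or $R^{1-\delta}$.
\item \emph{Tangential case.} This is where geometry enters.
\end{itemize}

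For the tangential part we perform a descent through varieties, as in Guth's $k$-broad argument. Tubes tangent to $Z(P)$ are repartitioned by polynomials on $Z$, producing a sequence of varieties $Z_m$ of dimensions $n-1,n-2,\ldots$ with tangential packets at each level. Because the normals $N_1,\ldots,N_k$ are transverse by \eqref{trans}, the $k$ families of tangential tubes can only all be tangent to a variety of dimension $m\ge k=n-2$. Hence the descent terminates at dimension $n-2$ or, at worst, when some family ceases to be tangent. In the terminal $(n-1)$- and $n-2$-dimensional tangential configurations the tube directions lie, near each point, in $T_xZ_m$, which contains $N_1,\ldots,N_k$. So the frequency supports are localized to the slabs $\{\zeta: N_i(\zeta)\in T_xZ\}$. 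Here we apply the localized multilinear restriction theorem of Bejenaru/Beltran--Duncan--Hickman together with Maldague's regularized Kakeya--Brascamp--Lieb. These give the BCT-type bound $\qg=2/(k-1)$ with a gain from the reduced number of directions. Interpolating that bound against the trivial $L^2$ energy estimates, we reach $\pc$.

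The main obstacle is the $(n-2)$-dimensional tangential case on $Z_{n-2}$. There the tangent plane is essentially $W^\perp$-complementary, and the common orthogonal complement $W$ from \eqref{Wdef} is the only remaining room, so transversality alone gives nothing. Here we use \eqref{curv}. Invertibility of $\pi_WA_i$ implies that the set of $\zeta\in S_i$ whose normal lies in a fixed $(n-2)$-plane, within angle $R^{-1/2}$, is an $R^{-1/2}$-neighbourhood of a curve-type set of small measure; concretely, the Gauss map restricted to $W$-directions is a local diffeomorphism. Consequently the packets tangent to $Z_{n-2}$ in a ball of radius $R^{1/2+\delta}$ carry only a fraction $R^{-c}$ of the energy. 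We would make this quantitative through positive packet-energy estimates: orthogonality gives $\sum_T\|\phi_T\|^2\lesssim\|f\|^2$ summed over tangential tubes in each ball. Combining this with the $L^2$ restriction to slabs yields the needed extra decay. The most delicate point, which we expect to be the hardest, is keeping the gain uniform across the whole frequency tuple and across the polynomially many ball pieces, with losses only $R^{O(\delta)}$. After that, choosing $\delta\ll\varepsilon$ and $D$ large closes the induction.
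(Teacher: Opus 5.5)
There is a genuine gap, and it sits at the step where you use \eqref{curv}. You claim that packets tangent to $Z_{n-2}$ carry only a fraction $R^{-c}$ of the energy, and this is false. The inputs are arbitrary $L^2$ functions, so $f_i$ can be concentrated entirely in the $\mu$-neighbourhood of $S'_{i,H}=\{\zeta:\pi_HN_i(\zeta)=0\}$. Then every packet is tangent, and together they carry all of $\|f_i\|_2^2$. Small measure of a frequency set gives no $L^2$ smallness, and orthogonality of packets cannot create it.

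The gain is multilinear and lives in $L^{\qg}$, not in $L^2$. In the paper, \eqref{curv} is used only to show that $S'_{i,H}$ is a smooth codimension-two submanifold whose normal space $\spn\{N_i,A_iH\}$ is quantitatively transverse to the other $k-1$ normals. That is exactly the hypothesis of the localized restriction theorem with codimensions $(2,0,\ldots,0)$, which gives a factor $\mu$ in $L^{\qg}$. Because $H$ depends on the local tangent plane, the estimate is applied on $b$-cubes $Q$ ($b\approx R^{1/2+\tau}$), each with its own plane $H_Q$ and local energies $a_i(Q)$. The sum $\sum_Q\prod_ia_i(Q)^{\qg}$ is what the Kakeya--Brascamp--Lieb input controls. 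The resulting $L^{\qg}$ bound $R^{-1/2+O(\tau)}$, interpolated against the $L^{2/k}$ bound $R^{k/2}$, cancels exactly at $\pc$.

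The scales also matter. Tangency must be measured in balls of radius $\rho=R^{1-\tau}$ with angle tolerance $b/\rho$. Then Guth's lemma shows each tube is nontangent in only $O(1)$ balls, so the purely nontangent product can be handled by induction at radius $\rho$. Balls of radius $R^{1/2+\delta}$ with angle $R^{-1/2}$ give no such count.

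Your descent also puts the terminal cases in the wrong places.

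\emph{The hypersurface level cannot be terminal.} Tangency to an $(n-1)$-dimensional variety is only a codimension-one frequency condition, and the localized theorem then gains only $\mu^{1/2}$. Codimension one in two factors would need $\pi_WA_1h$ and $\pi_WA_2h$ to be independent, which fails for spheres. With $\mu^{1/2}$, the interpolation leaves a loss of $R^{k/(4(k+1))}$.

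\emph{The hypersurface level is the critical cellular step, so you cannot induct at $\pc$ itself.} Far families enter only $O(1)$ cells, and with $t$ of them the cell exponent is $e(s,t,p)=s-\frac p2(k(s-1)+t)$. One has $e(k+1,0,\pc)=0$ exactly, while the ambient value is $-1/k$. So it is this step, not the ambient cell count, that pins $\pc$. The paper runs the induction at $p>\pc$ and returns to $\pc$ by H\"older on $B_R$.

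\emph{Families transverse to the current variety cannot simply end the descent.} They are kept as far families, and the descent continues while $s+t\ge k+1$. The terminal cases are all $1\le s\le k$ with $t=k-s$, including mixed configurations. There the two-plane must be $(T_zZ+\spn\{\bar N_a: a\text{ far}\})^\perp$, and every product with at least one tangent factor goes to the tangential estimate.
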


Strictly elliptic patches satisfy \eqref{curv}: after a choice of orientation, $\langle A_iv,v\rangle\ge c|v|^2$, and for $v\in W$,
\[
 |\pi_WA_iv|\,|v|\ge|\langle A_iv,v\rangle|\ge c|v|^2.
\]
It is also satisfied by transversal patches of a light cone.

Nonvanishing Gaussian curvature alone does not imply \eqref{curv}. We extend the original inputs by zero to the fixed larger patches and use the larger extension operators when forming packets. This leaves room for the small frequency enlargements in the induction.

\section{Notation and external results}\label{externalinputs}

Write
\[
 \Pf=\prod_{i=1}^k\|f_i\|_2,\qquad
 \Nbh_b(Y)=\{x:\dist(x,Y)<b\}.
\]
The notation $CQ$ means the concentric dilation of a cube $Q$. Constants in $\lesssim$ are independent of the large radius $R$, but may depend on the fixed dimension and geometry. For a unit vector $v$ and a linear subspace $V$, put
\[
 \ang(v,V)=|\pi_{V^\perp}v|.
\]
For subspaces of equal dimension let $\gap(V,V')=\|\pi_V-\pi_{V'}\|$. We use $N_\zeta Y=(T_\zeta Y)^\perp$ for the normal space of a smooth submanifold. Wedges of subspaces are computed using orthonormal bases.

For all $q>0$, $\|F\|_q=(\int|F|^q)^{1/q}$. If $q\le1$, then
\begin{equation}\label{subadd}
 \left|\sum_jF_j\right|^q\le\sum_j|F_j|^q.
\end{equation}
All interpolation below is the integral form of H\"older's inequality; no Banach-space interpolation theorem is needed.

\begin{external}[Localized multilinear restriction (Bejenaru \cite{BejLoc})]\label{localinput}
Let $S'_i\subset S_i$ have codimension $c_i$ inside $S_i$, where $(c_1,\ldots,c_k)$ is a permutation of $(2,0,\ldots,0)$. Suppose
\[
 |N_{\zeta_1}S'_1\wedge\cdots\wedge N_{\zeta_k}S'_k|\ge c>0.
\]
On sufficiently small fixed charts, if the localized factor is supported within distance $\mu$ of $S'_i$, then
\begin{equation}\label{localizedinputeq}
 \left\|\prod_{i=1}^kE_ih_i\right\|_{L^{\qg}(B_L)}
 \le C_\eta L^\eta\mu\prod_{i=1}^k\|h_i\|_2,
 \qquad L\ge1,\quad 0<\mu<1,
\end{equation}
for every $\eta>0$.
\end{external}

 The normal-space dimensions are $3,1,\ldots,1$, totaling $k+2=n$. In the integral of the $\qg$ power the gain is $\mu^{\qg}$; in the quasi-norm it is $\mu$. The loss parameter in the cited integral formulation has been renamed.

We will use the uniform version for a smooth compact family of charts with common regularity bounds, a common chart radius, and a positive lower bound on the normal-space wedge. The quantitative justification for this use is given after Lemma~\ref{curvaturelemma}. Uniformity is not inferred merely from pointwise finiteness of the constants in \eqref{localizedinputeq}.

\begin{external}[Weighted multilinear Kakeya  (Maldague  \cite{Maldague})]\label{kakeyainput}
Fix independent unit vectors $n_1,\ldots,n_k\in\R^n$. For sufficiently small fixed directional neighborhoods, families $\T_i$ of infinite $b$-tubes in those directions and weights $c_{i,T}\ge0$ satisfy
\begin{equation}\label{kakeyainputeq}
 \int_{B_R}\prod_{i=1}^k
 \left(\sum_{T\in\T_i}c_{i,T}\one_T\right)^{1/(k-1)}
 \le C_\eta R^\eta b^{n-\eta}
 \prod_{i=1}^k\left(\sum_Tc_{i,T}\right)^{1/(k-1)},
 \qquad 1\le b<R.
\end{equation}
An infinite $b$-tube is the $b$-neighborhood of an affine line.
\end{external}

 Its regularized Brascamp--Lieb condition follows directly from independence. With $L_i=\pi_{n_i^\perp}$, for every subspace $U$,
\begin{equation}\label{BLcheck}
 \sum_{i=1}^k\dim L_iU
 =k\dim U-\#\{i:n_i\in U\}\ge(k-1)\dim U.
\end{equation}
These are the dimension inequalities for weights $1/(k-1)$.

A smooth complete intersection of dimension $s$ is a closed set
\[
 Z=Z(P_1,\ldots,P_{n-s})
\]
on which the gradients of the defining polynomials are linearly independent. A degree bound is a bound on their degrees. We allow $Z=\R^n$ when $s=n$.

\begin{external}[Algebraic tools]\label{alginput}
The following statements hold.
\begin{enumerate}
\item A finite nonzero absolutely continuous measure on $\R^s$ admits a polynomial partition of degree $O(D)$ into $\asymp D^s$ sign regions of comparable mass. The factors admit arbitrarily small generic constant perturbations preserving this comparison.
\item A degree-bounded smooth complete intersection $Z$ can be cut by finitely many smooth complete intersections $\Sigma_a\subset Z$ of dimension $s-1$ so that its tangent planes vary by less than any prescribed $\gamma>0$ on each component of $Z\setminus\bigcup_a\Sigma_a$. The number and degrees of the cuts depend only on the old degree, $n$, and $\gamma$.
\item If $\ell$ is a line and $Z$ has degree bound $L$, then
\[
 \{z\in Z\cap\Nbh_b(\ell):\ang(\ell,T_zZ)>\beta\}
\]
is covered by $C_{n,L}$ balls of radius $C_{n,L}b/\beta$.
\item A zero-dimensional smooth complete intersection has at most the product of its defining degrees many points.
\end{enumerate}
\end{external}

These are \cite[Theorem~5.5, Lemma~5.6 and the following paragraph, Lemma~5.7, and Theorem~5.8]{Guth}. The source uses the usual angle; our sine convention is comparable in the range used. A sign region need not be connected. A line not contained in the zero set enters at most $O(D)$ sign regions. At $s=n$ the tangent plane is constant and no cuts are needed.

\begin{external}[Epsilon removal]\label{epsiloninput}
Suppose the compact smooth patches are transverse and, for some $a>0$,
\[
 |E_i\psi(x)|\le C_\psi(1+|x|)^{-a},\qquad \psi\in C_c^\infty(S_i).
\]
If a local product estimate at exponent $u\ge2/k$ holds with every loss $R^\delta$, $\delta>0$, then the global product estimate holds at every exponent $v>u$.
\end{external}

This follows from \cite[Theorem~1.1]{BejLoc} by taking its loss parameter sufficiently small for each prescribed $v>u$.

\section{Wave packets and their energies}\label{packetssection}

We first subdivide the frequency patches into finitely many small fixed pieces. Expanding the product and using \eqref{subadd} shows that this does not restrict the original data. We may assume that the normals in the $i$th extended patch lie in a sufficiently small fixed neighborhood of an independent vector $n_i$.

\begin{lemma}[Local Plancherel estimate]\label{plancherel}
For a small smooth hypersurface patch and $L\ge1$,
\begin{equation}\label{planchereleq}
 \|Eh\|_{L^2(B(x_0,L))}^2\lesssim L\|h\|_2^2,
 \qquad \|Eh\|_\infty\lesssim\|h\|_2.
\end{equation}
\end{lemma}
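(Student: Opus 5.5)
The sup bound is immediate: since $|e^{ix\cdot\zeta}|=1$, Cauchy--Schwarz gives
\[
 |Eh(x)|\le\int_S|h|\dd\sigma\le\sigma(S)^{1/2}\|h\|_2\lesssim\|h\|_2,
\]
because the patch has bounded surface measure.

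For the $L^2$ bound on $B(x_0,L)$ I will use the standard mollified-Plancherel argument. Translating $x_0$ to the origin only multiplies $h$ by the unimodular factor $e^{ix_0\cdot\zeta}$, so assume $x_0=0$. Since the patch is small, after a rotation it is a graph
\[
 S=\{(\xi,\phi(\xi)):\xi\in U\},\qquad U\subset\R^{n-1}\ \text{compact},\quad |\nabla\phi|\lesssim1.
\]
Write $x=(x',x_n)$ and $g(\xi)=h(\xi,\phi(\xi))J(\xi)$, where $J=\sqrt{1+|\nabla\phi|^2}$ is the Jacobian. Then
\[
 Eh(x)=\int_U e^{i(x'\cdot\xi+x_n\phi(\xi))}g(\xi)\dd\xi .
\]
For each fixed $x_n$ this is the inverse Fourier transform in $x'$ of $e^{ix_n\phi}g$, up to $(2\pi)$-normalization. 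Plancherel in $x'\in\R^{n-1}$ therefore gives, for each $x_n$,
\[
 \int_{\R^{n-1}}|Eh(x',x_n)|^2\dd x'\lesssim\|g\|_{L^2(U)}^2\lesssim\|h\|_{L^2(S)}^2,
\]
using $J\sim1$. The ball satisfies $B(0,L)\subset\R^{n-1}\times[-L,L]$. Integrating the slice bound over $x_n\in[-L,L]$ gives $\|Eh\|_{L^2(B(0,L))}^2\lesssim L\|h\|_2^2$.

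If instead the patch is not a single graph, the reduction at the start of Section~\ref{packetssection} has already split it into finitely many small pieces. Each piece is a graph over some coordinate hyperplane, and the bound follows for the sum from the triangle inequality with finitely many terms.

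The only point needing care is uniformity. The implied constants depend only on $\sup J$, $\inf J$ and the chart, all of which are fixed by the geometry, and the bound holds for every $L\ge1$ and every centre. There is no real obstacle here; this is the routine input to be used for the packet energies.
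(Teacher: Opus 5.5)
Your proposal is correct and follows the paper's proof essentially verbatim: write the patch as a graph, absorb the Jacobian, apply Plancherel in $x'$ on each slice, integrate over the $x_n$-projection of the ball (length $2L$), and get the sup bound from Cauchy--Schwarz on the compact patch. The translation to $x_0=0$ and the remark about splitting a non-graph patch are harmless extras.
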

\begin{proof}
Rotate the patch into a graph $(\xi,\phi(\xi))$, $\xi\in\R^{n-1}$, and absorb the smooth surface Jacobian into the input. At each fixed $x_n$, Plancherel in $x'\in\R^{n-1}$ gives
\[
 \int_{\R^{n-1}}|Eh(x',x_n)|^2\dd x'\lesssim\|h\|_2^2.
\]
The $x_n$-projection of the ball has length $2L$. The second assertion is Cauchy--Schwarz on the compact frequency patch.
\end{proof}

Fix $0<\tau<1/20$ and set
\begin{equation}\label{scales}
 w=R^{1/2+\tau/10},\qquad
 r=R^{1/2+\tau},\qquad
 \rho=R^{1-\tau}.
\end{equation}
For sufficiently large $R$, $R^{1/2}\ll w\ll r\ll\rho\ll R$. All constants multiplying these scales are fixed independently of $R$.

\begin{lemma}[Packet decomposition]\label{packetlemma}
Each input has an $L^2$ decomposition $f_i=\sum_{T\in\T_i}f_{i,T}$. The frequency support of an atom has diameter $CR^{-1/2}$, and its tube has radius $Cw$, length $CR$, and direction equal to the normal at the cap centre. If $e_{i,T}=\|f_{i,T}\|_2^2$, then
\begin{align}
 \sum_Te_{i,T}&\lesssim\|f_i\|_2^2,\label{energytotal}\\
 \left\|\sum_{T\in\mathcal A}f_{i,T}\right\|_2^2
 &\lesssim\sum_{T\in\mathcal A}e_{i,T}
 \qquad(\mathcal A\subset\T_i).\label{subsetenergy}
\end{align}
Uniformly on $B_R$, the sum of any subcollection of packets whose tubes do not contain $x$ is bounded by $C_NR^{-N}\|f_i\|_2$ for every $N$.
\end{lemma}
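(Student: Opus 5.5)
We construct the decomposition by the standard smooth wave packet recipe, carried out on the extended patch $S_i$ after rotating it into a graph $(\xi,\phi(\xi))$ as in the proof of Lemma~\ref{plancherel}. Absorbing the surface Jacobian, we regard $f_i$ as a function $g$ on a compact set of $\R^{n-1}$.

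\emph{Frequency and space partitions.} Take a smooth partition of unity $\sum_\theta\chi_\theta=1$ subordinate to caps $\theta$ of diameter $\sim R^{-1/2}$, enlarged by a factor $C$, so that frequency supports have diameter $CR^{-1/2}$. Take also a smooth partition $\sum_v\psi_v=1$ on $\R^{n-1}$, indexed by $v\in w\mathbb Z^{n-1}$. Here $\psi_v$ is adapted to the cube of side $w$ centred at $v$ and has Fourier support in a ball of radius $\lesssim w^{-1}\le R^{-1/2}$; this is possible by taking $\psi_v=\psi(\cdot-v)$ with $\hat\psi$ compactly supported and $\sum_v\psi(\cdot-v)\equiv1$. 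Define
\[
 f_{i,T}=\bigl(\psi_v(\chi_\theta g)^\vee\bigr)^\wedge ,\qquad T=(\theta,v).
\]
The Fourier support of $\psi_v$ is tiny, so $f_{i,T}$ is supported in the $CR^{-1/2}$-dilate of $\theta$. The tube $T$ is the $Cw$-neighbourhood of the segment through $(v,0)$ in direction $N_i(\omega_\theta)$, where $\omega_\theta$ is the cap centre, of length $CR$ inside $B_R$. Summing over $v$ and then over $\theta$ gives $f_i=\sum_T f_{i,T}$.

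\emph{Energy bounds.} We choose $\psi\ge0$ with $\sum_v\psi_v^2\lesssim1$. Plancherel then gives
\[
 \sum_v e_{i,T}=\sum_v\|\psi_v(\chi_\theta g)^\vee\|_2^2\lesssim\|\chi_\theta g\|_2^2 ,
\]
and the caps have bounded overlap, so \eqref{energytotal} follows. For \eqref{subsetenergy}, we write $\sum_{T\in\mathcal A}f_{i,T}$ as a sum over $\theta$ of pieces with boundedly overlapping frequency supports. This lets us pass to $\sum_\theta\|\sum_{v:(\theta,v)\in\mathcal A}\psi_v(\chi_\theta g)^\vee\|_2^2$. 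Inside each cap, the functions $\psi_v$ have bounded overlap up to Schwartz tails, so $|\sum_{v\in V}\psi_v F|^2\lesssim\sum_v\psi_v^2|F|^2$ when $\psi$ is taken compactly supported in physical space. This creates a conflict with the Fourier support requirement, and we handle it as follows. We take $\hat\psi$ compactly supported, and then use the rapid decay $|\psi_v(y)|\lesssim_N(1+|y-v|/w)^{-N}$ together with Cauchy--Schwarz with weights $(1+|y-v|/w)^{-N/2}$. This bounds $|\sum_{v\in V}\psi_vF|^2$ by $\sum_v(1+|y-v|/w)^{-N/2}|F|^2$, and this is again $\lesssim|F|^2$, but is not yet localized to $V$.

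To localize, we require $\psi\ge0$ and arrange $\sum_v\psi_v^2\gtrsim$ a dilated sum. The cleanest route is to define $f_{i,T}$ with $\psi_v^2$ replaced by $\psi_v=\varphi_v^2/\sum\varphi^2$, which gives an almost orthogonal frame estimate. I expect to present this as a routine Schur-test: the Gram matrix $\langle\psi_vF_\theta,\psi_{v'}F_\theta\rangle$ is dominated by $(1+|v-v'|/w)^{-N}$ times local $L^2$ masses.

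\emph{Spatial concentration.} For $x\in B_R$, stationary phase gives
\[
 E_if_{i,T}(x)=\int e^{i(x'\cdot\xi+x_n\phi(\xi))}\chi_\theta g\ast\hat\psi_v\,d\xi ,
\]
which is $(\psi_v(\chi_\theta g)^\vee)$ propagated by the phase $x_n\phi$. On a cap of width $R^{-1/2}$ and for $|x_n|\lesssim R$, the phase is linear up to an $O(1)$ error. Nonstationary phase, integrating by parts in $\xi$ with the symbol bounds $|\partial^\alpha(\chi_\theta)|\lesssim R^{|\alpha|/2}$, gives decay $(1+\dist(x,T)/R^{1/2})^{-N}$.

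\emph{Main obstacle.} The main obstacle is that the tube radius is $w=R^{1/2+\tau/10}$ rather than $R^{1/2}$. This slack is exactly what converts polynomial decay into $R^{-N}$: outside $T$ the decay is $(w/R^{1/2})^{-N}=R^{-\tau N/10}$, and taking $N$ large gives $C_NR^{-N'}\|f_{i,T}\|_2$. Summing over the at most $R^{O(1)}$ packets, using Cauchy--Schwarz and \eqref{energytotal}, yields the claimed $C_NR^{-N}\|f_i\|_2$ bound for any subcollection of packets whose tubes avoid $x$.
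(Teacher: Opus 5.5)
Your proposal has two genuine gaps: your construction does not satisfy \eqref{subsetenergy}, and its packets are not rapidly decreasing outside the $Cw$-tubes.

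\textbf{Subset energy.} For one cap, with $F_\theta=(\chi_\theta g)^\vee$ and $e_{i,T}=\|\psi_vF_\theta\|_2^2$, \eqref{subsetenergy} asks for $\|\sum_{v\in V}\psi_vF_\theta\|_2^2\lesssim\sum_{v\in V}\|\psi_vF_\theta\|_2^2$ for every set $V$ of lattice points. Since $\widehat\psi$ is compactly supported, $\psi$ has full support, and this fails in general. Take $F_\theta$ concentrated at scale $R^{1/2}\ll w$ near a point $y_0$. Let $V$ be lattice points at distance about $Dw$ from $y_0$ on which the values $\psi_v(y_0)>0$ are comparable, for instance a thin shell when $\psi$ is radial. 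As $R\to\infty$ the ratio of the two sides tends to $\bigl(\sum_V\psi_v(y_0)\bigr)^2/\sum_V\psi_v(y_0)^2\gtrsim\#V\sim D^{n-2}$, which is unbounded. A Schur test cannot repair this. For $v,v'$ on that shell the off-diagonal Gram entries are as large as the diagonal ones, so what you control is a sum of weighted local masses of $F_\theta$, not $\sum_{T\in\mathcal A}e_{i,T}$. Making $\varphi$ compactly supported in space gives bounded overlap, but it destroys the compact frequency support of the atoms. The lemma asserts that support, and the later localization needs it (Lemma~\ref{Hlemma}, Corollary~\ref{curvedlocal}, and the support margins). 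A physical-space partition yields uniform subset bounds only through bounded overlap, and bounded overlap is incompatible with compact frequency support.

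\textbf{Spatial concentration.} Your $\psi_v$ lives at scale $w$. At $x_n=0$ the packet is therefore $\psi_vF_\theta$, whose tails decay at scale $w$, the same as the tube radius $Cw$. Outside the tube it is smaller only by a fixed factor, since a nonzero bandlimited $\psi$ cannot vanish outside a ball. Concretely, the packets with $|v-y_0|>Cw$ sum at $(y_0,0)$ to $\bigl(\sum_{|v-y_0|>Cw}\psi_v(y_0)\bigr)F_\theta(y_0)$. This is a fixed fraction of $|F_\theta(y_0)|\sim R^{-(n-1)/4}\|g\|_2$, not $O(R^{-N}\|g\|_2)$. Your integration by parts treats the amplitude as smooth at frequency scale $R^{-1/2}$. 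In fact $\widehat{\psi_v}\ast(\chi_\theta g)$ is only smooth at scale $w^{-1}$, because $g$ is an arbitrary $L^2$ function. The $R^{\tau/10}$ slack you point to exists only if the physical localization is at scale $R^{1/2}$.

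\textbf{How the paper gets both properties.} It works on the frequency side.
\begin{enumerate}
\item Expand $\chi_\theta f$ in an orthonormal Fourier series on a cube of side $\sim R^{-1/2}$.
\item Multiply each exponential by a fixed cutoff $\tilde\chi_\theta$ that equals one on the cap piece, is supported inside the cube, and satisfies $|\tilde\chi_\theta|\le1$.
\end{enumerate}
This reconstructs $\chi_\theta f$ exactly, with frequency supports of diameter $CR^{-1/2}$. Then \eqref{subsetenergy} follows from Parseval and $|\tilde\chi_\theta|\le1$, since each atom's energy is comparable to $|c_{\theta,v}|^2$. In physical space each atom is a translate, along a lattice of spacing $\sim R^{1/2}$, of the fixed Schwartz profile $\check{\tilde\chi}_\theta$ at scale $R^{1/2}\ll w$. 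That is what makes the tails outside the $Cw$-tubes $O(R^{-N})$ after summing over the lattice.
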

\begin{proof}
Choose a bounded-overlap smooth partition into frequency cubes of side $R^{-1/2}$. Expand each piece in an orthonormal Fourier series on a larger cube. Multiply the basis elements by a smooth cutoff equal to one on the cap piece and supported strictly inside the Fourier-series cube. The series reconstructs that piece exactly and introduces no periodic replicas. Parseval and bounded overlap prove \eqref{energytotal} and \eqref{subsetenergy}; each atom has norm comparable to the modulus of its coefficient.

For cap centre $\xi_\theta$ and Fourier frequency $v$, the phase is
\[
 x'\cdot\xi+x_n\phi(\xi)+v\cdot\xi.
\]
The associated line is $x'+x_n\nabla\phi(\xi_\theta)+v=0$, with normal direction $(-\nabla\phi(\xi_\theta),1)$. Rescale $\xi=\xi_\theta+R^{-1/2}\eta$. Derivatives of the rescaled phase of order at least two are bounded on $B_R$. Integration by parts gives
\[
 |Ef_{\theta,v}(x)|
 \lesssim_M R^{-(n-1)/4}|c_{\theta,v}|
 \left(1+\frac{|x'+x_n\nabla\phi(\xi_\theta)+v|}{R^{1/2}}\right)^{-M}.
\]
There are $O(R^{(n-1)/2})$ caps, while $v$ runs over a lattice with spacing comparable to $R^{1/2}$. Cauchy--Schwarz in the coefficients and summation of the lattice tails outside radius $w$ give the asserted bound, with $M$ chosen sufficiently large in terms of $N,\tau$. This argument uses absolute values and is uniform over subcollections. The tubes may be truncated at length $CR$ since only $B_R$ is considered.
\end{proof}

Below $\Rap(R)$ denotes a quantity bounded by $C_NR^{-N}$ for every fixed $N$. All packet selections use the original energies. Infinite series may first be truncated and then recovered by \eqref{subsetenergy}. The $L^\infty$ bound and \eqref{subadd} absorb packet errors in products and their positive powers. All spatial grids used below have polynomial cardinality in $R$, so summing these errors preserves rapid decrease.

\begin{lemma}[Summation of packet energies]\label{energysum}
Let $b=\lambda r$ with fixed $\lambda\ge1$, and let $Q$ run over a disjoint $b$-grid meeting $B_R$. For a sufficiently large fixed dilation constant, put
\begin{equation}\label{aQ}
 a_i(Q)^2=\sum_{T:T\cap CQ\ne\varnothing}e_{i,T}.
\end{equation}
Then for every $\eta>0$,
\begin{equation}\label{energysumeq}
 \sum_Q\prod_{i=1}^k a_i(Q)^{\qg}
 \lesssim_\eta (R/b)^\eta\Pf^{\qg}.
\end{equation}
\end{lemma}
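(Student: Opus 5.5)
The plan is to reduce \eqref{energysumeq} directly to the weighted multilinear Kakeya estimate of External input~\ref{kakeyainput}, used at a spatial scale comparable to $b$. The point is that $\qg=2/(k-1)$, so $a_i(Q)^{\qg}=(a_i(Q)^2)^{1/(k-1)}$. This is exactly the power appearing in \eqref{kakeyainputeq}, with the packet energies $e_{i,T}$ playing the role of the weights $c_{i,T}$.

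First I will replace each packet tube by a fattened infinite tube. For $T\in\T_i$, let $\ell_T$ be its core line; by Lemma~\ref{packetlemma} its direction is the normal at the cap centre, which lies in the fixed small neighborhood of $n_i$. Let $\widetilde T$ be the infinite $C'b$-tube around $\ell_T$. Suppose $T\cap CQ\ne\varnothing$. Every point of $CQ$ is then within distance $Cw+\operatorname{diam}(CQ)\lesssim w+b\lesssim b$ of $\ell_T$, since $w\ll r\le b$. So for $C'$ large, fixed independently of $R$, we get $CQ\subset\widetilde T$. Discarding the truncation of $T$ at length $CR$ only enlarges the tube, so it causes no harm. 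Hence for every $x\in Q$,
\[
 a_i(Q)^2\le\sum_{T\in\T_i}e_{i,T}\one_{\widetilde T}(x).
\]

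Next I will average over each cube. Since $|Q|=b^n$, the previous display gives
\[
 \prod_{i=1}^k a_i(Q)^{\qg}\le b^{-n}\int_Q\prod_{i=1}^k\Bigl(\sum_{T\in\T_i}e_{i,T}\one_{\widetilde T}\Bigr)^{1/(k-1)} .
\]
The cubes $Q$ are disjoint and all of them meet $B_R$. Because $b<R$ for large $R$, they are all contained in a concentric ball of radius $2R$ (or $CR$). Summing over $Q$ therefore bounds the left side of \eqref{energysumeq} by $b^{-n}$ times the integral over this larger ball.

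Now I apply \eqref{kakeyainputeq} on that ball, with tube radius $C'b$ and weights $c_{i,T}=e_{i,T}$, to get
\[
 \sum_Q\prod_i a_i(Q)^{\qg}\lesssim_\eta b^{-n}R^\eta(C'b)^{n-\eta}\prod_i\Bigl(\sum_Te_{i,T}\Bigr)^{1/(k-1)}\lesssim_\eta (R/b)^\eta\prod_i\|f_i\|_2^{2/(k-1)}.
 \]
The last step uses \eqref{energytotal}. The right side is $(R/b)^\eta\Pf^{\qg}$, which is \eqref{energysumeq}. The hypotheses of External input~\ref{kakeyainput} hold: the directional localization near independent $n_i$ was arranged at the start of Section~\ref{packetssection}, and $1\le C'b<2R$ for $R$ large. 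If the packet families are infinite, I truncate them first and pass to the limit by monotone convergence.

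The only step needing care is the containment $CQ\subset\widetilde T$, which requires $w\lesssim b$; this follows from \eqref{scales} and $\lambda\ge1$. The rest is bookkeeping of fixed constants, none of which depends on $R$.
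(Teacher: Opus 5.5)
Your proposal is correct and follows the paper's proof essentially step for step. You enlarge each packet axis to an infinite $C'b$-tube so that $T\cap CQ\ne\varnothing$ forces $Q\subset\widetilde T$, bound $b^n\sum_Q\prod_i a_i(Q)^{\qg}$ by the weighted Kakeya integral over a ball of radius $CR$, and conclude with External input~\ref{kakeyainput} and \eqref{energytotal}; your extra checks ($w\ll r\le b$, $1\le C'b<2R$, and truncation of infinite families) are bookkeeping that the paper leaves implicit.
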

\begin{proof}
Enlarge each packet axis to an infinite tube $\widetilde T$ of radius $C'b$. If $C'$ is sufficiently large, $T\cap CQ\ne\varnothing$ implies $Q\subset\widetilde T$. Thus
\[
 b^n\sum_Q\prod_i a_i(Q)^{\qg}
 \le\int_{B_{CR}}\prod_i
 \left(\sum_Te_{i,T}\one_{\widetilde T}(x)\right)^{1/(k-1)}\dd x.
\]
Apply External input~\ref{kakeyainput}, use \eqref{energytotal}, and divide by $b^n$. The constants in the dilations affect only the implicit constant.
\end{proof}

\section{Curvature and thin support}\label{curvaturesection}

\begin{lemma}\label{curvaturelemma}
After a sufficiently small fixed subdivision, let $H$ be a two-plane sufficiently close to $W$ for every frequency tuple under consideration. On uniform charts in a fixed extension,
\[
 S'_{i,H}=\{\zeta:\pi_HN_i(\zeta)=0\}
\]
is a smooth $(n-3)$-dimensional submanifold wherever it is nonempty. For sufficiently small $\mu$,
\begin{equation}\label{nearlevel}
 |\pi_HN_i(\zeta)|\le\mu
 \quad\Longrightarrow\quad
 \dist(\zeta,S'_{i,H})\lesssim\mu.
\end{equation}
Its normal space is quantitatively independent of the other $k-1$ surface normals.
\end{lemma}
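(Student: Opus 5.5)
We view $S'_{i,H}$ as the zero set of the map $G_i\colon S_i\to H\cong\R^2$, $G_i(\zeta)=\pi_HN_i(\zeta)$, and prove everything through a quantitative implicit function theorem. The differential of this map at $\zeta$ is $dG_i(\zeta)v=-\pi_HA_i(\zeta)v$ for $v\in T_\zeta S_i$.

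\textbf{Step 1: surjectivity of $dG_i$.} First take $H=W$ at a frequency tuple containing $\zeta$. Then $W\subset T_\zeta S_i$, and by \eqref{curv} the restriction $-\pi_WA_i(\zeta)|_W\colon W\to W$ is invertible with inverse bounded by $\kappa^{-1}$. Hence $dG_i(\zeta)$ maps $T_\zeta S_i$ onto $H$, with a right inverse of norm $\lesssim\kappa^{-1}$.

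For a general $H$ with $\gap(H,W)\le\delta$ small, we perturb. The maps $\pi_HA_i\pi_H$ and $\pi_WA_i\pi_W$ differ by $O(\delta)$. Moreover, $H$ lies within $O(\delta)$ of $T_\zeta S_i$, so $\pi_{T_\zeta S_i}H$ is a two-plane on which $\pi_HA_i$ is still uniformly invertible onto $H$. Only boundedness of $A_i$, $\kappa$, and the fixed geometry enter, so the subdivision and $\delta$ are chosen uniformly.

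\textbf{Step 2: manifold structure and \eqref{nearlevel}.} Work on fixed charts of $S_i$ with uniform $C^2$ bounds. By the uniform surjectivity from Step 1, the implicit function theorem shows that $S'_{i,H}$ is a smooth submanifold of codimension $2$ in $S_i$, that is, of dimension $n-3$. It also gives a chart radius independent of $H$.

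For \eqref{nearlevel}, we run a Newton iteration from $\zeta$ with $|G_i(\zeta)|\le\mu$. Each step is $\zeta\mapsto\zeta-\mathcal R(G_i(\zeta))$, where $\mathcal R$ is the bounded right inverse, followed by projection back to the surface. The iteration converges to a point of $S'_{i,H}$ within distance $\lesssim\kappa^{-1}\mu$, provided $\mu$ is below a threshold fixed by the $C^2$ bounds.

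The main delicacy is that a near-zero must produce an actual zero inside the same chart. For this the chart must contain a ball of fixed radius around $\zeta$. That is exactly why the paper extends to larger patches and sets the inputs to zero outside the originals: the Newton step never leaves the fixed extension.

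\textbf{Step 3: normal space and transversality.} The tangent space of $S'_{i,H}$ at a point is $\ker dG_i=\{v\in T S_i:\pi_HA_iv=0\}$. Its normal space in $\R^n$ is therefore $\spn\{N_i\}\oplus(\ker dG_i)^{\perp_{TS_i}}$, which has dimension $3$.

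By Step 1, $(\ker dG_i)^{\perp}$ inside $TS_i$ lies within $O(\delta)$ of a two-plane $H'$ with $\pi_HA_i|_{H'}$ invertible. We want the full normal space to be independent of $N_j$, $j\ne i$. Since $\spn\{N_j\}_{j}=W^\perp$, this is equivalent to the $3$-dimensional normal space projecting onto $W$ together with $N_i$ spanning a complement of the span of the other normals.

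Concretely, we must show that $|N\wedge N_1\wedge\cdots\widehat{N_i}\cdots\wedge N_k|$ is bounded below, where $N$ is an orthonormal basis of the normal space of $S'_{i,H}$. This reduces to
\[
|\det(\pi_W|_{(\ker dG_i)^\perp})|\cdot|N_i\wedge\cdots|\gtrsim_{\kappa,\nu}1.
\]

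The obstacle is exactly this step: showing that $\pi_W$ restricted to $(\ker dG_i)^\perp$ is non-degenerate. We plan to argue by duality. If some $w\in W$ were orthogonal to $(\ker dG_i)^\perp$, then $w\in\ker dG_i$, so $\pi_HA_iw=0$. This contradicts \eqref{curv} once $H$ is close to $W$. A compactness and continuity argument then makes the bound quantitative, with constants depending only on $\kappa$, $\nu$, and the $C^2$ bounds. This uniformity is what the remark after External input~\ref{localinput} requires.
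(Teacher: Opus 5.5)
Your proposal is correct. Steps 1 and 2 coincide with the paper's argument: \eqref{curv}, together with the small perturbation from $\pi_W$ to $\pi_H$, gives $-\pi_HA_i$ a uniformly bounded right inverse on $W\subset T_\zeta S_i$. The quantitative implicit function theorem, with room supplied by the fixed extension, then gives the codimension-two level set and \eqref{nearlevel}.

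For the transversality claim you take a different, dual route. The paper uses $H\subset T_\zeta S_i$ at a zero and the symmetry of $A_i$ to identify the normal space explicitly as $\spn\{N_i,A_iH\}$. It then bounds $|N_1\wedge\cdots\wedge N_k\wedge A_ih_1\wedge A_ih_2|$ from below using the invertibility of $\pi_WA_i|_H\colon H\to W$.

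You never compute the normal space. You write it as $\spn\{N_i\}\oplus X$ with $X=(\ker dG_i)^{\perp}\cap T_\zeta S_i$, and bound the wedge below by $\nu\,|\det(\pi_W|_X)|$. Since $\pi_W|_X$ and $\pi_X|_W$ are adjoint, a degeneracy would produce a nonzero $w\in W\cap\ker dG_i$, which contradicts \eqref{curv} once $H$ is close to $W$.

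Both arguments rest on the same linear algebra: $X$ is the range of the adjoint of $dG_i$, and at a zero this range is $A_iH$. Your version needs neither the symmetry of $A_i$ nor $H\subset T_\zeta S_i$. The paper's version gives explicit spanning vectors for the normal space.

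The compactness-and-continuity step you plan is unnecessary. For a unit $w\in W$ one has $w-\pi_Xw\in\ker dG_i$, hence
\[
 \kappa-C\delta\le|\pi_HA_iw|=|\pi_HA_i\pi_Xw|\le\|A_i\|\,|\pi_Xw|.
\]
This bounds the singular values of $\pi_X|_W$ from below, and therefore $|\det(\pi_W|_X)|$, directly in terms of $\kappa$ and $\|A_i\|$. That is the uniform constant needed for the localized estimate.
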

\begin{proof}
For $\Phi(\zeta)=\pi_HN_i(\zeta)$,
\[
 D\Phi_\zeta v=-\pi_HA_i(\zeta)v.
\]
On $W$, condition \eqref{curv} makes $\pi_WA_i|_W$ invertible with smallest singular value at least $\kappa$. Replacing the output projection by $\pi_H$ is a small perturbation. Thus $D\Phi|_W$ has a uniformly bounded right inverse. The quantitative implicit function theorem gives a codimension-two level set and \eqref{nearlevel}. The fixed extension leaves room for the displacement.

At a zero of $\Phi$, $H\subset T_\zeta S_i$. Symmetry of the shape operator gives
\[
 T_\zeta S'_{i,H}
 =\{v\in T_\zeta S_i:\langle A_iv,h\rangle=0\text{ for all }h\in H\},
\]
and hence
\begin{equation}\label{normalformula}
 N_\zeta S'_{i,H}=\spn\{N_i,A_iH\}.
\end{equation}
The map $\pi_WA_i|_H:H\to W$ is invertible by \eqref{curv} and the closeness of $H$ and $W$. The $k$ normals, followed by the images under $A_i$ of an orthonormal basis of $H$, therefore have $n$-dimensional volume bounded below. Their lengths are bounded above. This proves the required normal-space transversality.

We record uniformity explicitly. Fix a compact set of admissible planes $H$ inside the preceding small neighborhood. The right inverses have a common bound, and the fixed surfaces have uniform derivative bounds. The parametric quantitative implicit function theorem supplies a common chart radius and bounds for the graph derivatives. A finite covering of the compact frequency patches and the plane parameter set yields uniformly finitely many such charts. The exterior-product bound above also has a common positive lower bound. These are the geometric bounds needed for the uniform localized estimate below.
\end{proof}

\begin{remark}[Uniformity of localization]\label{uniformremark}
The family of zero sets in Lemma~\ref{curvaturelemma} is parametrized on finitely many fixed-size charts by a compact set of two-planes and frequency centers. Local orthonormal frames on the Grassmannian suffice; no global frame is required. The quantitative implicit function theorem bounds all derivatives of these charts in terms of derivatives of the fixed surfaces and the right-inverse bound. The ambient normal wedges have a common positive lower bound.

To pass from these facts to a uniform constant in External input~\ref{localinput}, use the proof of \cite[Theorem~1.4, Sections~3--4]{BDH}. Its tubular coordinate changes and their Jacobians are uniform on this compact family after a common shrinking of the charts. The linear data, both for the localized ensemble and its unlocalized subensemble, range over a compact set of transverse data. The perturbative neighborhoods in \cite[Theorem~2.6]{BDH} cover this set. A finite subcover gives a common directional neighborhood and, for each fixed loss, a common Kakeya constant. In the recursion of \cite[Lemma~4.2]{BDH}, the wave-packet bounds, coordinate changes, and bounded-scale estimates therefore have common constants. The same recursion gives \eqref{localizedinputeq} uniformly for the entire family. This uses the stability in the cited proof, in addition to compactness of the family.
\end{remark}

\begin{corollary}\label{curvedlocal}
If one factor satisfies
\[
 \supp h_i\subset\{\zeta:|\pi_HN_i(\zeta)|\le C\mu\},
\]
then
\begin{equation}\label{curvedlocaleq}
 \left\|\prod_{i=1}^kE_ih_i\right\|_{L^{\qg}(B_b)}
 \lesssim_\eta b^\eta\mu\prod_{i=1}^k\|h_i\|_2.
\end{equation}
The constant is independent of the admissible plane $H$ and of $\mu$.
\end{corollary}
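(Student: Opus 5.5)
The plan is to reduce Corollary~\ref{curvedlocal} directly to External input~\ref{localinput}, using Lemma~\ref{curvaturelemma} to produce the required codimension-two submanifold and Remark~\ref{uniformremark} to make the constant uniform.

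\emph{Choice of submanifolds.} Relabel so that the localized factor is $i=1$. Take $S'_1=S'_{1,H}$, the zero set of $\pi_HN_1$, which has codimension two in $S_1$. For $i\ge2$ take $S'_i=S_i$, of codimension zero. The codimension vector is then a permutation of $(2,0,\ldots,0)$, as External input~\ref{localinput} requires.

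\emph{Support condition and transversality.} By \eqref{nearlevel}, applied with $C\mu$ in place of $\mu$, the hypothesis $\supp h_1\subset\{|\pi_HN_1|\le C\mu\}$ gives $\dist(\zeta,S'_{1,H})\lesssim\mu$ for every $\zeta\in\supp h_1$. So $h_1$ is supported within distance $C'\mu$ of $S'_1$.
\begin{itemize}
\item If $C'\mu$ is not below the fixed small threshold of Lemma~\ref{curvaturelemma}, then $\mu\gtrsim1$. In that case the bound follows from the unlocalized estimate: External input~\ref{localinput} with the trivial localization, or equally the $\mu\sim1$ case.
\item The normal spaces are $N_\zeta S'_1=\spn\{N_1,A_1H\}$ by \eqref{normalformula}, and $N_i$ for $i\ge2$. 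The last part of Lemma~\ref{curvaturelemma} gives a wedge lower bound $c>0$, independent of $H$.
\item If $S'_{1,H}$ is empty near the support, then the support is empty once $\mu$ is small, and the estimate is trivial.
\end{itemize}

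\emph{Localization to charts.} Cover the frequency patches by the finitely many uniform charts of Lemma~\ref{curvaturelemma}. Split each $h_i$ into boundedly many pieces subordinate to these charts and expand the product. On each term, apply \eqref{localizedinputeq} with $L=b$ and $\mu$ replaced by $C'\mu$, which changes only constants. Then recombine the terms using \eqref{subadd}, since $\qg\le1$. This yields \eqref{curvedlocaleq}.

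\emph{Main obstacle: uniformity.} The constant $C_\eta$ in External input~\ref{localinput} must be uniform over the admissible planes $H$ and the chart centres. This is exactly what Remark~\ref{uniformremark} supplies. The family of pairs (plane, chart) is compact, chart derivatives and wedge bounds are uniform, and the stability in the proof of \cite{BDH} gives a common constant. I would cite Remark~\ref{uniformremark} here rather than reprove it. Independence of $\mu$ is automatic, because $\mu$ enters \eqref{localizedinputeq} only through the explicit factor.
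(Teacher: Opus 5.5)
Your proposal is correct and is essentially the paper's own proof: \eqref{nearlevel} turns the support hypothesis into $O(\mu)$-closeness to $S'_{1,H}$, and \eqref{normalformula} with Lemma~\ref{curvaturelemma} supplies the normal-space wedge bound; a fixed chart partition recombined by \eqref{subadd} then reduces to External input~\ref{localinput}, and Remark~\ref{uniformremark} gives uniformity in $H$. Your extra large-$\mu$ case is never needed, since the corollary is applied only with $\mu=C\alpha_b\to0$; note also that External input~\ref{localinput} does not literally cover the trivial localization, so there you would be citing the Bennett--Carbery--Tao estimate, or using that the whole small patch lies within a small fixed distance of $S'_{1,H}$.
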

\begin{proof}
Use \eqref{nearlevel}, \eqref{normalformula}, and External input~\ref{localinput}. A fixed smooth frequency partition reduces to its small charts. Summing finitely many products by \eqref{subadd} costs a fixed constant.
\end{proof}

\section{Projection and cell counting}\label{projectionsection}

Two elementary consequences of the algebraic tools will be used at every stage.

\begin{lemma}[Lifting a short segment]\label{lifting}
Let $Z$ be a smooth complete intersection, $\Sigma\subset Z$ a closed union of cuts, and $Z_0$ a component of $Z\setminus\Sigma$. Suppose
\[
 \gap(T_zZ,V)<\gamma_0<1/10\qquad(z\in Z_0),
 \qquad \dim V=\dim Z.
\]
Let $P=P_V\circ\pi_V$. If
\[
 |x-z|<b,\quad z\in Z_0,\quad
 \dist(z,\Sigma)>10b,\quad
 \dist(x,Z(P))<b,
\]
then
\begin{equation}\label{walllift}
 \dist(x,Z\cap Z(P))<5b.
\end{equation}
\end{lemma}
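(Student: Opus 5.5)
We prove \eqref{walllift} by lifting a short segment in $V$ to a curve in $Z$. The gap hypothesis makes $\pi_V$ restricted to $T_zZ$ a near-isometry. We lift the segment from $\pi_Vz$ to a point of $Z(P_V)$, and the lifted curve stays in the region where the hypothesis holds.

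\emph{Step 1: the linear estimate.} For $w\in T_{z'}Z$ with $z'\in Z_0$ we have $\pi_{T_{z'}Z}w=w$. Hence
\[
 |\pi_Vw-w|=|(\pi_V-\pi_{T_{z'}Z})w|<\gamma_0|w|,
 \qquad\text{so}\qquad |\pi_Vw|\ge(1-\gamma_0)|w|.
\]
Since $\dim V=\dim Z$, the map $\pi_V|_{T_{z'}Z}:T_{z'}Z\to V$ is an isomorphism. Its inverse has norm at most $(1-\gamma_0)^{-1}<10/9$.

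\emph{Step 2: the target point.} Choose $y\in Z(P)$ with $|x-y|<b$. Then $v:=\pi_Vy$ satisfies $P_V(v)=P(y)=0$. Every point $z''$ with $\pi_Vz''=v$ therefore lies in $Z(P)$. We also have
\[
 |v-\pi_Vz|\le|y-z|\le|y-x|+|x-z|<2b.
\]

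\emph{Step 3: the lift.} Let $\sigma(t)=\pi_Vz+t(v-\pi_Vz)$ for $t\in[0,1]$. We solve the ODE on $Z$
\[
 \dot c(t)=\bigl(\pi_V|_{T_{c(t)}Z}\bigr)^{-1}(v-\pi_Vz),\qquad c(0)=z.
\]
This ODE is smooth on $Z$ near any point of $Z_0$, and along its solutions $\pi_Vc(t)=\sigma(t)$. On any interval where $c$ stays in $Z_0$, Step 1 gives
\[
 |\dot c|\le\tfrac{10}{9}|v-\pi_Vz|<\tfrac{20}{9}b,
\]
so $|c(t)-z|<2.5b$.

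We now argue by continuity. The ball $B(z,2.5b)$ does not meet $\Sigma$, because $\dist(z,\Sigma)>10b$. Thus $c$ stays in the component $Z_0$ of $Z\setminus\Sigma$ that contains $z$, and so the Step 1 bound continues to apply. Since $Z$ is closed and the speed is bounded, the solution extends to $t=1$.

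\emph{Step 4: conclusion.} Set $z''=c(1)$. Then $z''\in Z$, $\pi_Vz''=v$, so $z''\in Z\cap Z(P)$, and $|z''-z|<2.5b$. Hence
\[
 \dist(x,Z\cap Z(P))\le|x-z|+|z-z''|<b+2.5b<5b,
\]
which is \eqref{walllift}.

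The main obstacle is Step 3. A naive choice of any point of $Z$ over $v$ could land on a different sheet of $Z$, or in a component where the gap bound fails. The continuation argument avoids this, because it keeps the curve in $Z_0$ while controlling its length. The margin $10b$ in the distance to $\Sigma$ is much larger than the $2.5b$ needed.
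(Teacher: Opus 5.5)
Your proof is correct and takes essentially the same route as the paper. Both arguments pick a point of $Z(P_V)$ within $2b$ of $\pi_Vz$, lift the segment by the ODE $\dot c=(\pi_V|_{T_cZ})^{-1}(v-\pi_Vz)$, and use the speed bound, $\dist(z,\Sigma)>10b$, and closedness of $Z$ to continue the solution inside $Z_0$ up to time one. The only difference is that your inverse bound $(1-\gamma_0)^{-1}<10/9$ is sharper than the paper's bound $2$, so your displacement is below $20b/9$ instead of $4b$.
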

\begin{proof}
Since $Z(P)=\pi_V^{-1}(Z(P_V))$, choose $y\in Z(P_V)$ with
$|y-\pi_Vx|<b$. Then $|y-\pi_Vz|<2b$. The map $\pi_V|_{T_uZ}$ is invertible on $Z_0$, with inverse norm at most two. Solve
\[
 u'(t)=\bigl(\pi_V|_{T_{u(t)}Z}\bigr)^{-1}(y-\pi_Vz),
 \qquad u(0)=z.
\]
Its speed is less than $4b$, so a solution up to time one stays in $\overline B(z,4b)$, away from $\Sigma$. A maximal solution cannot end there: its bounded speed gives a limit point, and closedness and smoothness of $Z$ place that point in the same component, where the vector field extends. Thus the solution exists to time one.

Its projection is $\pi_Vu(t)=\pi_Vz+t(y-\pi_Vz)$. Therefore $u(1)\in Z\cap Z(P)$ and $|x-u(1)|<5b$. No global injectivity of $\pi_V|_Z$ is needed.
\end{proof}

\begin{lemma}[Trimmed cells]\label{cellcount}
Suppose $P=P_V\circ\pi_V$ has degree $O(D)$. Remove $\Nbh_b(Z(P))$ from its sign regions. Let $\Omega$ have the property that each $x\in\Omega$ has a witness $z\in Z$ with
\[
 |x-z|<b,\qquad \gap(T_zZ,V)<2\gamma.
\]
A packet tube of radius $Cw\ll b$ meets at most $O(D)$ trimmed sign regions. If
$\ang(v(T),V)>4\gamma$, then
\begin{equation}\label{farcount}
 \#\{O:T\cap\Omega\cap(O\setminus\Nbh_b(Z(P)))\ne\varnothing\}
 \le C_{L,\gamma},
\end{equation}
where $L$ bounds the degree of $Z$. This constant is independent of the new degree $D$.
\end{lemma}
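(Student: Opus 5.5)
The first assertion is a statement about lines, and I would reduce it to the fact recorded after External input~\ref{alginput}: a line not contained in the zero set enters at most $O(D)$ sign regions. Let $\ell$ be the axis of $T$. A point $x\in T\setminus\Nbh_b(Z(P))$ lies within $Cw$ of a point $x'\in\ell$. Since $Cw\ll b$, the ball $B(x,Cw)$ misses $Z(P)$, so $P$ has constant sign on the segment $[x,x']$. Hence $x'$ lies in the same sign region $O$ as $x$, and every trimmed region met by $T$ is met by $\ell$.

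If $\ell\not\subset Z(P)$, the count is $O(D)$ by the recorded fact; this applies to $\deg P=O(D)$ after a generic constant perturbation if necessary. If $\ell\subset Z(P)$, then every point of $T$ lies within $Cw<b$ of $Z(P)$. The tube then meets no trimmed region, and there is nothing to prove. Truncating $\ell$ to a segment of length $CR$ changes nothing.

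For \eqref{farcount} I would use the structure $P=P_V\circ\pi_V$ together with External input~\ref{alginput}(3).

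\textbf{Step 1: localise the relevant points.} Let $x\in T\cap\Omega$ lie in a trimmed region. Its witness $z\in Z$ satisfies $|x-z|<b$ and $\gap(T_zZ,V)<2\gamma$. Since $\ang(v(T),V)>4\gamma$, the gap bound gives $\ang(v(T),T_zZ)>2\gamma$. Also $z\in Z\cap\Nbh_{b+Cw}(\ell)\subset Z\cap\Nbh_{2b}(\ell)$. By External input~\ref{alginput}(3) with $\beta=2\gamma$, all such $z$ lie in $C_{n,L}$ balls of radius $C_{n,L}b/\gamma$. Consequently all relevant $x$ lie in $C_{n,L}$ balls $B_j$ of radius $C'b/\gamma$, where $C'$ depends on $L$ and $n$.

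\textbf{Step 2: count regions inside one ball.} It remains to show that $T\cap B_j$ meets only $C_{L,\gamma}$ trimmed sign regions. Every $x\in T\cap B_j$ outside $\Nbh_b(Z(P))$ is joined by a segment of length at most $Cw$ to a point of $\ell\cap B_j$, and that segment stays off $Z(P)$. So it suffices to count the sign regions met by the segment $\ell\cap B_j$ at points farther than $b/2$ from $Z(P)$.

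Parametrise this segment by $t\in[0,2C'b/\gamma]$. Consecutive distinct regions are separated by a crossing of $Z(P)$, and each far point carries a clearance of $b/2$ in $t$ around it. Between two far points lying in different regions there must be a zero of $P$ at distance at least $b/2$ from both. Far points at mutual distance less than $b$ share their region, because the joining segment stays off $Z(P)$. Partition the segment into intervals of length $b/2$. Each interval containing a far point lies in a single region. Hence the number of distinct regions is at most the number of intervals, namely $O(C'/\gamma)$.

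Summing over the $C_{n,L}$ balls gives $C_{L,\gamma}$, with no dependence on $D$. The factorisation through $\pi_V$ is not even needed for this count. The main obstacle is Step~1: checking that External input~\ref{alginput}(3) applies as stated, which needs the sine-convention comparability and a line rather than a tube. Enlarging the neighbourhood to $2b$ handles the latter.
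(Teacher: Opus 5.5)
Your proposal is correct and follows essentially the same route as the paper. Both arguments obtain the $O(D)$ bound by sliding each trimmed point onto the tube axis, and both apply External input~\ref{alginput}(3) to the witnesses, using the angle bound $>2\gamma$ that follows from the gap estimate, to confine the relevant points to boundedly many balls of radius $C_{L,\gamma}b$. The final count differs only cosmetically: the paper notes that representatives of distinct trimmed regions satisfy $|x_j-x_k|\ge 2b$, since the segment between them must cross $Z(P)$ while both endpoints have clearance $b$, and then packs these separated points in the covering balls — the same clearance argument as your interval count along the axis.
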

\begin{proof}
If $x\in T$ lies outside the wall, its short segment to the tube axis avoids $Z(P)$. The endpoint on the axis has the same sign pattern. The restriction of $P$ to the line has $O(D)$ zeros, unless it is identically zero. In the latter case the whole tube is in the wall.

For each cell counted in \eqref{farcount}, choose $x_j$ and a witness $z_j$. Then
\[
 \dist(z_j,\ell)\lesssim b,\qquad
 \ang(v(T),T_{z_j}Z)>2\gamma,
\]
where $\ell$ is the tube axis. External input~\ref{alginput}(3) covers these witnesses, and hence the $x_j$, by boundedly many balls of radius $C_{L,\gamma}b$.

For different sign regions, a factor has opposite signs at $x_j,x_k$. The segment between them meets $Z(P)$ at some $y$. Since both endpoints are at distance at least $b$ from $Z(P)$,
\[
 |x_j-x_k|=|x_j-y|+|y-x_k|\ge2b.
\]
Packing separated points in the covering balls proves \eqref{farcount}.
\end{proof}

\section{The dimension descent}\label{descentsection}

Fix
\begin{equation}\label{pchoice}
 \pc<p<\qg.
\end{equation}
All packets in the descent are formed once, at radius $R$. Every selected input has the form
$g_i=\sum_{T\in\mathcal A_i}f_{i,T}$, with $\mathcal A_i\subset\T_i$.

Let
\[
 M_0=\int_{B_R}\prod_{i=1}^k|E_if_i|^p.
\]
A stage consists of selected functions $g_i$, a smooth complete intersection $Z$ of dimension $s$, fixed constants $\lambda,K\ge1$, and a measurable set
\[
 \Omega\subset B_R\cap\Nbh_b(Z),\qquad b=\lambda r,
\]
such that
\begin{equation}\label{stageinvariant}
 M_0\le K M+\Rap(R)\Pf^p,\qquad
 M=\int_\Omega\prod_{i=1}^k|E_ig_i|^p.
\end{equation}
The degree of $Z$, $\lambda$, and $K$ are bounded independently of $R$. They may increase when the dimension decreases. Whenever a restriction retains a fraction $c$ of the current mass, we increase $K$ by the factor $c^{-1}$. Initially $Z=\R^n$, $s=n$, $\lambda=K=1$, and $g_i=f_i$. A zero-dimensional stage terminates as described at the end of this section.

Choose $\gamma>0$ sufficiently small in terms of the fixed geometry, with $2\gamma<1/10$. Apply the tangent-plane cuts of External input~\ref{alginput}(2), and let $\Sigma=\bigcup_a\Sigma_a$. At dimension $n$ take $\Sigma=\varnothing$. Fix $A=100$.

If a fixed fraction of $M$ lies in $\Nbh_{Ab}(\Sigma)$, one of the finitely many $\Sigma_a$ carries a fixed fraction. Restrict $\Omega$ to that neighborhood, replace $Z$ by $\Sigma_a$, and enlarge $\lambda$ by $A$. This is a new stage of dimension $s-1$.

Otherwise retain the part outside $\Nbh_{Ab}(\Sigma)$. Choose a finite $\gamma$-net of $s$-planes. Assign each component of $Z\setminus\Sigma$ to a plane within $\gamma$ of one of its tangent planes. All tangent planes on that component are then within $2\gamma$ of the assigned plane. For each plane take the points admitting a witness in an assigned component. These measurable sets cover the retained part of $\Omega$. Assign a point belonging to several sets to the first one in a fixed ordering. One plane $V$ then retains a fixed fraction of $M$. Restrict $\Omega$ to that portion. Each $x\in\Omega$ has a witness $z\in Z$ in an assigned component with
\[
 |x-z|<b,\qquad
 \dist(z,\Sigma)>(A-1)b,\qquad
 \gap(T_zZ,V)<2\gamma.
\]
Only the finite number of planes is used; the projection need not be globally one-to-one.

Split each selected family according to
\[
 \ang(v(T),V)\le4\gamma
 \quad\text{or}\quad
 \ang(v(T),V)>4\gamma,
\]
and call these alternatives near and far. By \eqref{subadd},
\begin{equation}\label{dominance}
 \int_\Omega\prod_i|E_ig_i|^p
 \le
 \sum_{\boldsymbol\sigma\in\{\mathrm{near},\mathrm{far}\}^k}
 \int_\Omega\prod_i|E_ig_i^{\sigma_i}|^p.
\end{equation}
Choose a largest summand and replace $g_i$ by the selected sums, multiplying $K$ by at most $2^k$.

We now reset the density:
\begin{equation}\label{density}
 d\mu(x)=\one_\Omega(x)\prod_{i=1}^k|E_ig_i(x)|^p\dd x.
\end{equation}
The measure to be partitioned is this new measure. Inequality \eqref{dominance} compares total integrals and does not give a comparison on each future cell.

Let $t$ denote the number of selected far families. If the mass is nonzero, every selected family is nonempty. Choose the initial frequency patches sufficiently small that their normal variation is much less than $\gamma$. Then all normals of a nonempty near family are within $C\gamma$ of $V$. Transversality then forces
\begin{equation}\label{stcondition}
 s+t\ge k.
\end{equation}
Indeed, otherwise more than $s$ of the $k$ independent normals would lie within $C\gamma$ of an $s$-plane, contradicting \eqref{trans}.

Let $A_p(L)$ denote the best local constant at radius $L$, allowing arbitrary physical centres and the support margins described in Section~\ref{inductionsection}.

\begin{proposition}[One partition]\label{onepartition}
At a stage with $s\ge1$, put
\begin{equation}\label{cellpower}
 e(s,t,p)=s-\frac p2\bigl(k(s-1)+t\bigr).
\end{equation}
If $e(s,t,p)<0$, then for every prescribed $c>0$ a new degree $D$, independent of $R$, can be chosen so that either
\[
 M_0\le c A_p(R/2)^p\Pf^p+\Rap(R)\Pf^p,
\]
or a new stage satisfying \eqref{stageinvariant} has dimension $s-1$. Its constants depend only on the old bounds, the fixed geometry, $p$, and $c$.
\end{proposition}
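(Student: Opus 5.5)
We partition the density $\mu$ of \eqref{density} using the projected polynomial method. Push $\mu$ forward under $\pi_V$ to a measure on $V\cong\R^s$. Apply External input~\ref{alginput}(1) there with degree $D$ to obtain $P_V$ with $\asymp D^s$ sign regions of comparable projected mass. Set $P=P_V\circ\pi_V$. Each sign region $O$ of $P$ is the preimage of a region in $V$, so the cells $O\cap\Omega$ carry comparable $\mu$-mass $\gtrsim D^{-s}\mu(\Omega)$. We then split $\mu(\Omega)$ into the wall part $\Omega\cap\Nbh_b(Z(P))$ and the cellular part, and argue by cases.

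\emph{Wall case.} Suppose a fixed fraction of $M$ lies in $\Nbh_b(Z(P))$. Each such $x$ has a witness $z\in Z$ with $|x-z|<b$ and $\dist(z,\Sigma)>(A-1)b$, and the tangent planes of $Z$ are $2\gamma$-close to $V$. So Lemma~\ref{lifting} gives $\dist(x,Z\cap Z(P))<5b$. Using the generic perturbation in External input~\ref{alginput}(1), we arrange that $Z\cap Z(P)$ is a smooth complete intersection of dimension $s-1$ whose degree is bounded by $O(D)$ times the old degree. Restricting $\Omega$ and replacing $\lambda$ by $5\lambda$ and $K$ by $CK$ then yields a new stage of dimension $s-1$. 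The selected functions $g_i$ stay fixed, so \eqref{stageinvariant} persists.

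\emph{Cellular case.} Suppose a fixed fraction of the mass lies in the trimmed cells. Then at least $\gtrsim D^s$ cells $O$ each carry mass $\gtrsim D^{-s}M$. For each such cell let $g_{i,O}$ be the sum of the packets of $g_i$ whose tubes meet $O\cap\Omega$. By Lemma~\ref{packetlemma}, $E_ig_i=E_ig_{i,O}+\Rap(R)$ on $O\cap\Omega$.
\begin{itemize}
\item A near packet meets $O(D)$ trimmed cells, by the first part of Lemma~\ref{cellcount}.
\item A far packet meets $O(1)$ trimmed cells, with the constant independent of $D$, by \eqref{farcount}.
\end{itemize}
Summing over cells with \eqref{subsetenergy} gives $\sum_O\|g_{i,O}\|_2^2\lesssim D\|g_i\|_2^2$ for near families and $\lesssim\|g_i\|_2^2$ for far ones.

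We next bound each cell by the induction hypothesis. We cover $O$ by balls of radius $R/2$. The number of balls is bounded, but it may depend on $R$ only through the fixed ratio, so we should use physical localization on $B_R$ with $O(1)$ such balls. Each cell then satisfies
\[
\mu(O)\lesssim A_p(R/2)^p\prod_i\|g_{i,O}\|_2^p.
\]
Pigeonholing, the typical cell has $\prod_i\|g_{i,O}\|_2^2$ at most of order $D^{k-t}\cdot D^{-s}\cdot(\text{something})$. More precisely, we use Hölder on the product over the $\asymp D^s$ good cells. This gives
\[
\sum_O\prod_i\|g_{i,O}\|^p\le (\#O)^{1-pk/2}\prod_i\Bigl(\sum_O\|g_{i,O}\|^2\Bigr)^{p/2}.
\]
Combining this with the comparable-mass property yields
\[
M\lesssim D^{s}\min_O\mu(O)\lesssim D^{s}\cdot D^{-s}\sum\ldots,
\]
and so $M\lesssim A_p(R/2)^pD^{\,s-\frac p2(k s-(k-t))}\Pf^p$.

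The exponent here is $s-\frac p2(k(s-1)+t)=e(s,t,p)<0$. Choosing $D$ large therefore gives $M_0\le KM\le cA_p(R/2)^p\Pf^p$.

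\emph{Main obstacle.} The hard step is the bookkeeping in this Hölder/pigeonhole step. The argument must genuinely use the comparability of cell masses, since each cell controls only its own mass and only the minimum cell mass links to $M$. It must also handle the fact that the $\Pf$ factors of the $g_i$ are bounded by those of the $f_i$, via \eqref{subsetenergy}. A further difficulty is making the degree of $Z\cap Z(P)$ and the support margins for $A_p(R/2)$ uniform.
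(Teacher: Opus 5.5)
\textbf{Gap in the H\"older step.} Your setup, the wall case, and the cell-counting bounds follow the paper's proof. The step that produces the gain in $D$ is wrong as written. The displayed inequality
\[
 \sum_O\prod_i\|g_{i,O}\|_2^p\le(\#O)^{1-pk/2}\prod_i\Bigl(\sum_O\|g_{i,O}\|_2^2\Bigr)^{p/2}
\]
is H\"older with weights $1-pk/2$ and $p/2,\ldots,p/2$. It therefore needs $1-pk/2\ge0$, i.e.\ $p\le 2/k$. Here $p>\pc>2/k$, so the power of $\#O$ is negative and the inequality is false. If all the energy of every family sits in a single cell, the left side exceeds the right by the factor $(\#O)^{pk/2-1}\gg1$.

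For $pk/2\ge1$ the correct inequality carries no power of $\#O$. Your chain $M\lesssim\sum_O\mu(O)\lesssim A_p(R/2)^p\sum_O\prod_i\|g_{i,O}\|_2^p$ then only yields $M\lesssim A_p(R/2)^pD^{p(k-t)/2}\Pf^p$, which is a loss, not a gain. No argument that sums the inductive bound over all cells can work. The gain comes from exhibiting \emph{one} cell that simultaneously has mass $\gtrsim D^{-s}M$ and small energy in all $k$ families.

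\textbf{The repair.} The paper gets this from the bounds $\sum_j\|g_{i,j}\|_2^2\lesssim D\|f_i\|_2^2$ (near) and $\lesssim C_{L,\gamma}\|f_i\|_2^2$ (far). Markov's inequality with a large fixed multiple of the averages discards, for each $i$, only a small fraction of the $\gtrsim D^s$ substantial-mass cells. Hence some substantial-mass cell has $\|g_{i,j}\|_2^2\lesssim D^{1-s}\|f_i\|_2^2$ for near $i$ and $\lesssim D^{-s}\|f_i\|_2^2$ for far $i$ at once. Apply $A_p(R/2)$ to that one cell and use $M\lesssim D^s\mu(O'_j)$.

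You can also fix your own line with the minimum you allude to. Over the good cells,
\[
 \min_O\prod_i\|g_{i,O}\|_2^2\le\Bigl(\frac1{\#O}\sum_O\prod_i\|g_{i,O}\|_2^{2/k}\Bigr)^k\le(\#O)^{-k}\prod_i\sum_O\|g_{i,O}\|_2^2 .
\]
Combined with $M\lesssim D^s\mu(O)$ for the minimizing good cell, this gives exactly $D^{e(s,t,p)}$.

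\textbf{Smaller points.}
\begin{itemize}
\item $Z\cap Z(P)$ is not itself a smooth complete intersection, since $Z(P)$ is the union of the zero sets of the partition factors. As in the paper, perturb so that $Z$ meets each factor's zero set in a smooth complete intersection. Then pass to one factor carrying a fixed fraction of the wall mass; the fraction may depend on $D$, which is allowed.
\item The cell families should be defined by tubes meeting the trimmed sets $\Omega\cap(O\setminus\Nbh_b(Z(P)))$, because Lemma~\ref{cellcount} counts only those.
\item The cell energies must be summed through the original energies $e_{i,T}$, giving bounds by $\|f_i\|_2^2$, not $\|g_i\|_2^2$. The estimate \eqref{subsetenergy} only goes one way.
\end{itemize}
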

\begin{proof}
Push \eqref{density} forward under $\pi_V$. This is a finite absolutely continuous measure on $V$, by Fubini. Apply External input~\ref{alginput}(1) and lift the resulting polynomial to $P=P_V\circ\pi_V$. Its sign regions satisfy
\begin{equation}\label{equalmass}
 \#\{O_j\}\asymp D^s,\qquad \mu(O_j)\asymp D^{-s}M.
\end{equation}
Choose the small generic perturbations so that the intersection of $Z$ with each factor's zero set is a smooth complete intersection.

Write $\mathcal W=\Nbh_b(Z(P))$ and $O'_j=O_j\setminus\mathcal W$. If $\mu(\mathcal W)$ is at least a sufficiently small fixed fraction of $M$, apply Lemma~\ref{lifting} with parameter $\gamma_0=2\gamma$. Its distance hypothesis follows from $A=100$. The relevant wall is within $5b$ of the union of the intersections with the partition factors. One intersection carries a fixed fraction. Replacing $Z$ by that intersection and $b$ by $5b$ gives the dimension drop.

Otherwise a fixed positive fraction of the $D^s$ cells satisfy
\[
 \mu(O'_j)\gtrsim D^{-s}M.
\]
For each such cell, and for all the other cells as well, retain the packets meeting $\Omega\cap O'_j$ and call their sums $g_{i,j}$. The subset-energy estimate and Lemma~\ref{cellcount} give
\begin{equation}\label{cellsums}
 \sum_j\|g_{i,j}\|_2^2\lesssim
 \begin{cases}
 D\|f_i\|_2^2,&i\text{ near},\\
 C_{L,\gamma}\|f_i\|_2^2,&i\text{ far}.
 \end{cases}
\end{equation}
Markov's inequality, with a sufficiently large fixed multiple of the averages, produces a substantial-mass cell for which all $k$ bounds hold:
\begin{equation}\label{cellindividual}
 \|g_{i,j}\|_2^2\lesssim
 \begin{cases}
 D^{1-s}\|f_i\|_2^2,&i\text{ near},\\
 C_{L,\gamma}D^{-s}\|f_i\|_2^2,&i\text{ far}.
 \end{cases}
\end{equation}
The constants here depend on the old stage but not on the new degree.

On $\Omega\cap O'_j$, omitted packets contribute only a rapidly decreasing error. Cover $B_R$ by a fixed number of balls of radius $R/2$ and apply the definition of $A_p(R/2)$ to the selected inputs. It follows that
\[
 \mu(O'_j)\lesssim
 A_p(R/2)^p D^{-\frac p2[k(s-1)+t]}\Pf^p
 +\Rap(R)\Pf^p.
\]
The substantial-mass lower bound and \eqref{stageinvariant} imply
\begin{equation}\label{cellestimate}
 M_0\le C_{\mathrm{old}}D^{e(s,t,p)}
 A_p(R/2)^p\Pf^p+\Rap(R)\Pf^p.
\end{equation}
Choose $D$ large enough that $C_{\mathrm{old}}D^{e(s,t,p)}\le c$. This is possible because the exponent is negative and $C_{\mathrm{old}}$ is independent of $D$.
\end{proof}

The exponent calculation can be made without a dimension-by-dimension list. At $p=\pc$,
\begin{align}
 e(k+2,0,\pc)&=-\frac1k,\label{ambientpower}\\
 e(k+1,0,\pc)&=0.\label{criticalcellpower}
\end{align}
For $1\le s\le k$, a nonterminal case has $s+t\ge k+1$, so $t\ge k-s+1$. Since $e$ decreases with $t$,
\begin{equation}\label{lowercellpower}
 e(s,t,\pc)\le e(s,k-s+1,\pc)
 =\frac{s-k-1}{k^2}<0.
\end{equation}
At $s=k+1$, positive $t$ only improves \eqref{criticalcellpower}; at $s=k+2=n$ all families are near. Thus all required cellular powers are strictly negative when $p>\pc$. In view of \eqref{stcondition}, the remaining positive-dimensional configurations are exactly
\begin{equation}\label{terminalcases}
 1\le s\le k,\qquad t=k-s.
\end{equation}

Every continuation reduces the dimension. There are at most $n$ dimension drops. At each new stage first bound the old degree, the neighborhood multiplier $\lambda$, and the mass constant $K$, and then choose the next degree. This finite hierarchy is independent of $R$ and of the coefficients of the polynomials.

If $s=0$, Bezout gives a bounded number of points. Cover $\Omega$ by the corresponding balls of radius $Cb$. The subset-energy bound yields
\begin{equation}\label{zerodimension}
 M_0^{1/p}\lesssim A_p(Cb)\Pf+\Rap(R)\Pf.
\end{equation}

\section{The terminal configurations}\label{terminalsection}

Fix one of \eqref{terminalcases}, so $s+t=k$. The functions, set $\Omega$, and algebraic variety are those selected in the descent. We continue to use the original $R$-scale packets.

Cover $B_R$ by balls $B_\ell=B(x_\ell,\rho)$ with centres on a comparable fixed lattice. Their overlap is bounded. In each ball retain the packets meeting $\Omega\cap B_\ell$. Put $\alpha_b=b/\rho$.

A retained packet is called tangent in $B_\ell$ if
\begin{equation}\label{tangentdefinition}
 \ang(v(T),T_zZ)\le\alpha_b
 \quad\text{for every }z\in Z\cap2B_\ell
 \text{ with }\dist(z,T)\le C_0b,
\end{equation}
where $C_0$ is a sufficiently large fixed constant. Otherwise it is nontangent. This is an angular condition; containment of a whole tube segment in a neighborhood of $Z$ is not assumed.

Write the two sums as $g_{i,\ell,\mathrm{tan}}$ and $g_{i,\ell,\mathrm{nt}}$. Their extensions add to $E_ig_i$ on $\Omega\cap B_\ell$ up to a rapidly decreasing error. Define
\begin{align}
 F_{\mathrm{nt}}(x)
 &=\sum_\ell\one_{\Omega\cap B_\ell}(x)
          \prod_{i=1}^k|E_ig_{i,\ell,\mathrm{nt}}(x)|,\label{Fnt}\\
 F_{\mathrm{tan}}(x)
 &=\sum_\ell\one_{\Omega\cap B_\ell}(x)
   \sum_{\substack{\boldsymbol\sigma\in\{\mathrm{tan},\mathrm{nt}\}^k\\
                   \boldsymbol\sigma\ne(\mathrm{nt},\ldots,\mathrm{nt})}}
          \prod_{i=1}^k|E_ig_{i,\ell,\sigma_i}(x)|.\label{Ftan}
\end{align}
The current absolute product on $\Omega$ is bounded by $F_{\mathrm{nt}}+F_{\mathrm{tan}}$ and a packet error.

\begin{lemma}[Nontangent terms]\label{ntlemma}
For $p$ as in \eqref{pchoice},
\begin{equation}\label{ntbound}
 \|F_{\mathrm{nt}}\|_p\lesssim A_p(\rho)\Pf.
\end{equation}
\end{lemma}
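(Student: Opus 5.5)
The plan is the standard Guth nontangent count, now carried out inside the terminal configuration. We bound the $p$-th power first. Since $p<\qg\le1$ and the balls $B_\ell$ have bounded overlap, we have pointwise $F_{\mathrm{nt}}^p\le\sum_\ell\one_{\Omega\cap B_\ell}\prod_i|E_ig_{i,\ell,\mathrm{nt}}|^p$, by \eqref{subadd}. Therefore
\[
 \|F_{\mathrm{nt}}\|_p^p\le\sum_\ell\int_{B_\ell}\prod_{i=1}^k|E_ig_{i,\ell,\mathrm{nt}}|^p
 \le A_p(\rho)^p\sum_\ell\prod_{i=1}^k\|g_{i,\ell,\mathrm{nt}}\|_2^p,
\]
using the definition of $A_p(\rho)$ on each ball. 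The support margins cause no trouble, because the packets were formed from the fixed extended patches. It therefore suffices to prove, for each $i$,
\[
 \sum_\ell\|g_{i,\ell,\mathrm{nt}}\|_2^2\lesssim\|f_i\|_2^2 .
\]
Granting this, Hölder in $\ell$ with exponents $2/p$ applied $k$ times does not work directly, since $kp/2$ may differ from one. Instead we use $\sum_\ell\prod_i x_{i\ell}^{p}\le\prod_i(\sum_\ell x_{i\ell}^{kp})^{1/k}$ together with $kp\ge k\pc>2$. This gives $\ell^{kp}\subset\ell^2$, so each factor is bounded by $(\sum_\ell x_{i\ell}^2)^{p/2}\lesssim\|f_i\|_2^p$. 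Here we use $k\pc=2(k+1)/k>2$, which is the key numerical fact.

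For the square-sum bound, we invoke \eqref{subsetenergy}. Then $\sum_\ell\|g_{i,\ell,\mathrm{nt}}\|_2^2\lesssim\sum_Te_{i,T}\,\#\{\ell:T\text{ is nontangent in }B_\ell\}$. So the core claim is that each packet tube $T$ (length $CR$, radius $Cw$) is nontangent, and retained, in only $O(1)$ balls $B_\ell$, with a constant depending on the degree bound of $Z$ and on $C_0$ but not on $R$. If $T$ is retained in $B_\ell$, it meets $\Omega\cap B_\ell\subset\Nbh_b(Z)$. If it is moreover nontangent, there is $z\in Z\cap2B_\ell$ with $\dist(z,T)\le C_0b$ and $\ang(v(T),T_zZ)>\alpha_b=b/\rho$. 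Apply External input~\ref{alginput}(3) to the axis $\ell_T$, with width $C_0b+Cw\lesssim b$ and $\beta=b/\rho$. The set of such $z$ is covered by $C_{n,L}$ balls of radius $C_{n,L}b\rho/b=C_{n,L}\rho$. Each of these balls meets only $O(1)$ of the lattice balls $2B_\ell$, because the centres are on a $\rho$-lattice with bounded overlap. Hence the count is $O_{L}(1)$.

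The main obstacle is precisely this counting step. We must verify that the algebraic covering lemma is used in the right scale regime, with width comparable to $b$ rather than $w$, since $w\ll b$. We must also check that the ratio $b/\beta=\rho$ produces balls of radius comparable to $\rho$, not larger by an $R$-dependent factor. The definition $\alpha_b=b/\rho$ was chosen exactly to make this ratio work.

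A minor point is that the witness $z$ lies in $2B_\ell$ rather than in $B_\ell$. This enlarges the overlap count only by a fixed factor. The rapidly decreasing packet errors are absorbed as stated after Lemma~\ref{packetlemma}.
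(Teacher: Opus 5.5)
Your proposal is correct and follows the paper's proof. It uses the same per-tube count of nontangent balls via External input~\ref{alginput}(3) with $\beta=\alpha_b$, which gives covering balls of radius $C\rho$, together with the subset-energy bound and induction at radius $\rho$ on each $B_\ell$; the only difference is cosmetic, since you apply H\"older first and then $\ell^2\subset\ell^{kp}$, while the paper first uses $\ell^{2/k}\subset\ell^p$ and then H\"older, and both use $kp>2$ (admissibility of the selected inputs at radius $\rho$ actually comes from the margin computation in Section~\ref{inductionsection}).
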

\begin{proof}
If a fixed tube $T$ is nontangent in $B_\ell$, there is $z\in Z\cap2B_\ell$ within $C_0b$ of $T$ with angle greater than $\alpha_b$. External input~\ref{alginput}(3), applied to an enlarged infinite tube, covers all such witnesses by boundedly many balls of radius
\[
 Cb/\alpha_b=C\rho.
\]
Only boundedly many lattice balls can have their doubles meeting this covering. Hence each tube is nontangent in boundedly many $B_\ell$, with a constant depending on the old degree but not on $R$. By the subset-energy estimate,
\begin{equation}\label{ntenergy}
 \sum_\ell\|g_{i,\ell,\mathrm{nt}}\|_2^2
 \lesssim\sum_Te_{i,T}\lesssim\|f_i\|_2^2.
\end{equation}
Set $b_{i,\ell}=\|g_{i,\ell,\mathrm{nt}}\|_2$. Subadditivity and induction at radius $\rho$ give
\[
 \|F_{\mathrm{nt}}\|_p^p
 \le A_p(\rho)^p\sum_\ell\left(\prod_i b_{i,\ell}\right)^p.
\]
Since $p>\pc>2/k$, monotonicity of sequence quasi-norms and H\"older give
\[
 \left(\sum_\ell\left(\prod_i b_{i,\ell}\right)^p\right)^{1/p}
 \le\left(\sum_\ell\prod_i b_{i,\ell}^{2/k}\right)^{k/2}
 \le\prod_i\left(\sum_\ell b_{i,\ell}^2\right)^{1/2}.
\]
Use \eqref{ntenergy}.
\end{proof}

Let $I$ be the set of far families, so $|I|=t$, and fix reference normals $\bar N_a$ for $a\in I$. For a $b$-cube $Q$ meeting $\Omega\cap B_\ell$, choose a witness $z_{Q,\ell}$ within $b$ of a point in that intersection, in a component assigned to $V$. Let
\begin{equation}\label{HQ}
 V_{Q,\ell}=T_{z_{Q,\ell}}Z,\qquad
 H_{Q,\ell}
 =\left(V_{Q,\ell}+\spn\{\bar N_a:a\in I\}\right)^\perp.
\end{equation}
Since $b\ll\rho$, the witness lies in $2B_\ell$ for all sufficiently large $R$.

\begin{lemma}\label{Hlemma}
The plane $H_{Q,\ell}$ has dimension two and is uniformly close to $W$ for every relevant frequency tuple. A tangent packet from family $i$ meeting $CQ$ satisfies
\begin{equation}\label{HQlocalization}
 |\pi_{H_{Q,\ell}}N_i(\zeta)|\lesssim\alpha_b
 \quad\text{on its frequency support}.
\end{equation}
\end{lemma}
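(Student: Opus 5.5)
We handle the dimension count and the closeness to $W$ together, and then derive \eqref{HQlocalization} from the tangency condition \eqref{tangentdefinition}.

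\emph{Dimension and closeness.} The witness $z_{Q,\ell}$ lies in a component assigned to $V$, so $\gap(V_{Q,\ell},V)<2\gamma$. Every near family has all its normals within $C\gamma$ of $V$. Since $s+t=k$, there are exactly $s$ near families, and their normals $N_i$ are quantitatively independent by \eqref{trans}. Hence their span is an $s$-plane within $C'\gamma$ of $V$, and so within $C''\gamma$ of $V_{Q,\ell}$.

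The far reference normals $\bar N_a$, $a\in I$, are within the small patch variation of the actual far normals. Therefore $V_{Q,\ell}+\spn\{\bar N_a\}$ is within $C\gamma$, in the gap metric, of $\spn\{N_1(\zeta_1),\ldots,N_k(\zeta_k)\}$ for every relevant tuple. Because that span is a $k$-plane with wedge at least $\nu$, this perturbation has dimension exactly $k=s+t$; in particular the sum is direct. Its orthogonal complement $H_{Q,\ell}$ is therefore a two-plane with $\gap(H_{Q,\ell},W)\lesssim\gamma$. Choosing $\gamma$ small makes $H_{Q,\ell}$ admissible in Lemma~\ref{curvaturelemma}.

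\emph{Localization.} Let $T$ be a tangent packet of family $i$ meeting $CQ$. The witness $z_{Q,\ell}$ lies within $b$ of a point of $Q$, so it is within $C_0b$ of $T$, taking $C_0$ larger than the dilation constants. It also lies in $Z\cap2B_\ell$. By \eqref{tangentdefinition}, $\ang(v(T),V_{Q,\ell})\le\alpha_b$. Since $H_{Q,\ell}\perp V_{Q,\ell}$, this gives $|\pi_{H_{Q,\ell}}v(T)|\le\ang(v(T),V_{Q,\ell})\le\alpha_b$. Here $v(T)$ is the normal at the cap centre, and the cap has diameter $CR^{-1/2}$, so on the frequency support $|N_i(\zeta)-v(T)|\lesssim R^{-1/2}$. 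Since $\alpha_b=\lambda R^{1/2+\tau}/R^{1-\tau}=\lambda R^{-1/2+2\tau}\gg R^{-1/2}$, we get $|\pi_{H_{Q,\ell}}N_i|\lesssim\alpha_b$ on the support. This holds for near and far families alike. For far families one could also use $\bar N_a\perp H_{Q,\ell}$, but this is not needed.

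\emph{Main obstacle.} The delicate step is the perturbation argument: showing that $V_{Q,\ell}+\spn\{\bar N_a\}$ is uniformly close to the span of the actual normals. It needs the quantitative fact that $s$ near normals inside a $C\gamma$-neighbourhood of the $s$-plane $V$, together with a wedge lower bound, span a plane close to $V$. I would prove this with the singular values of the matrix of normals, expressed in an orthonormal basis adapted to $V$. The remaining point is that $\gamma$ and the patch size are fixed before $R$, so all these constants are independent of $R$.
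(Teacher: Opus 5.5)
Your proposal is correct and follows the paper's argument. You perturb the quantitatively independent normal tuple by $O(\gamma)$ to get $\dim(V_{Q,\ell}+\spn\{\bar N_a\})=k$ with $H_{Q,\ell}$ close to $W$, and then combine tangency at the witness $z_{Q,\ell}$, $H_{Q,\ell}\subset V_{Q,\ell}^\perp$, and $R^{-1/2}\ll\alpha_b$ to obtain \eqref{HQlocalization}. The only cosmetic difference is that the paper projects the near reference normals onto $V_{Q,\ell}$, so their projections span $V_{Q,\ell}$ directly, whereas you show via singular values that the span of the near normals is $O(\gamma)$-close to $V_{Q,\ell}$; this is the same estimate.
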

\begin{proof}
There are $k-t=s$ near families. Choose a reference normal from each. Project these $s$ normals onto $V_{Q,\ell}$ and keep the $t$ far reference normals. The original tuple changes by $O(\gamma)$ and remains quantitatively independent. The projected $s$ vectors span $V_{Q,\ell}$, so the sum in \eqref{HQ} has dimension $s+t=k$. Its orthogonal complement is a two-plane close to the common normal complement. The same holds throughout the small fixed frequency patches.

If a tangent packet meets $CQ$, then $z_{Q,\ell}$ is at distance $O(b)$ from that packet. Choose $C_0$ large enough in \eqref{tangentdefinition}. Since $H_{Q,\ell}\subset V_{Q,\ell}^\perp$,
\[
 |\pi_{H_{Q,\ell}}v(T)|
 \le\ang(v(T),V_{Q,\ell})\le\alpha_b.
\]
The normal varies by $O(R^{-1/2})$ over the packet cap, and $R^{-1/2}\ll\alpha_b$. This gives \eqref{HQlocalization}.
\end{proof}

\begin{proposition}[Tangential terms]\label{tanprop}
For every $\eta>0$,
\begin{align}
 \|F_{\mathrm{tan}}\|_{\qg}
 &\lesssim_\eta R^{-1/2+2\tau+\eta}\Pf,\label{tangeneric}\\
 \|F_{\mathrm{tan}}\|_{2/k}
 &\lesssim R^{k/2}\Pf,\label{tanbase}\\
 \|F_{\mathrm{tan}}\|_\infty&\lesssim\Pf.\label{tanLinf}
\end{align}
Consequently, with
\begin{equation}\label{sigmadef}
 \sigma=\frac{k}{k+1}(2\tau+\eta),
\end{equation}
one has
\begin{equation}\label{tanLq}
 \|F_{\mathrm{tan}}\|_{\pc}\lesssim_\eta R^\sigma\Pf,
 \qquad
 \|F_{\mathrm{tan}}\|_p\lesssim_\eta R^\sigma\Pf
 \quad(\pc<p<\qg).
\end{equation}
\end{proposition}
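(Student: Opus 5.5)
Three pointwise or integral bounds for each $\ell$ and each pattern $\boldsymbol\sigma\ne(\mathrm{nt},\ldots,\mathrm{nt})$ come first, and then the bounds are combined by H\"older. The $L^\infty$ bound \eqref{tanLinf} is immediate. The second half of Lemma~\ref{plancherel} gives $|E_ig_{i,\ell,\sigma_i}|\lesssim\|f_i\|_2$ by the subset-energy estimate \eqref{subsetenergy}. Bounded overlap of the balls $B_\ell$ and the finite number of patterns finish it.

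For \eqref{tanbase}, fix $\ell$ and apply H\"older with $k$ factors in $L^2(B_\ell)$, so that $\|\prod_iE_ig_i\|_{L^{2/k}(B_\ell)}\le\prod_i\|E_ig_i\|_{L^2(B_\ell)}\lesssim\rho^{k/2}\prod_i\|g_{i,\ell,\sigma_i}\|_2$ by Lemma~\ref{plancherel}. Then $\|F_{\mathrm{tan}}\|_{2/k}^{2/k}\le\sum_\ell(\cdots)^{2/k}$ by \eqref{subadd}. The sum over $\ell$ is controlled by H\"older over $\ell$ as in the proof of Lemma~\ref{ntlemma}, except that each packet may now lie in many balls. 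I would not try to control tangent multiplicity here. Instead I would use the crude bound $\sum_\ell\|g_{i,\ell}\|_2^2\lesssim(R/\rho)\|f_i\|_2^2$, since a tube of length $R$ meets $O(R/\rho)$ balls. This gives $\rho^{k/2}(R/\rho)^{k/2}=R^{k/2}$, which is exactly \eqref{tanbase}.

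The main step is \eqref{tangeneric}. In each term of $F_{\mathrm{tan}}$ at least one factor $i_0$ is tangent. Decompose $\Omega\cap B_\ell$ into $b$-cubes $Q$. On $Q$, only packets meeting $CQ$ contribute, up to $\Rap(R)$. By Lemma~\ref{Hlemma}, the tangent factor restricted to packets meeting $CQ$ is frequency-supported in $\{|\pi_{H_{Q,\ell}}N_{i_0}|\lesssim\alpha_b\}$. Corollary~\ref{curvedlocal}, which is uniform in the admissible plane $H_{Q,\ell}$, applied on $B_b$ with $\mu=\alpha_b=b/\rho=\lambda R^{-1/2+2\tau}$, bounds the $L^{\qg}(Q)$ quasi-norm by $b^\eta\alpha_b\prod_ia_i(Q)$, where $a_i(Q)$ is as in \eqref{aQ}. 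Raising to the power $\qg$, summing over $Q$ and $\ell$ via \eqref{subadd} (the cubes are disjoint and the overlap in $\ell$ is bounded), and invoking Lemma~\ref{energysum} gives $\|F_{\mathrm{tan}}\|_{\qg}^{\qg}\lesssim R^{\eta}\alpha_b^{\qg}\Pf^{\qg}$. This is \eqref{tangeneric}. One subtlety needs care: the selected sums $g_{i,\ell,\sigma_i}$ restricted to $CQ$ must be sums of original packets so that the energies are dominated by $a_i(Q)^2$. This holds since all selections are subsets of $\T_i$.

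Finally \eqref{tanLq} follows from the integral form of H\"older. Write $1/\pc=\theta/(2/k)+(1-\theta)/\qg$. With $1/\pc=k^2/(2(k+1))$, $k/2$ and $(k-1)/2$, the interpolation parameter is $\theta=1/(k+1)$. The resulting power is $\theta\cdot k/2+(1-\theta)(-1/2+2\tau+\eta)=\frac{k}{k+1}(2\tau+\eta)=\sigma$. For $\pc<p<\qg$, interpolate between $\pc$ and $\qg$, or between $\pc$ and \eqref{tanLinf}; both endpoint bounds are at most $R^\sigma\Pf$. I expect the main obstacle to be the per-cube application of the localized estimate. That step requires the uniformity in $H$ from Remark~\ref{uniformremark}, and it requires that the cube $Q$ lie in the region where the tangency witness $z_{Q,\ell}$ controls every tangent packet meeting $CQ$, which is why $C_0$ is large.
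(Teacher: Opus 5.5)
Your proposal is correct and follows the paper's proof. It uses the same per-cube application of Corollary~\ref{curvedlocal} with $\mu\sim\alpha_b$ followed by Lemma~\ref{energysum}, the same $R/\rho$ multiplicity count with local Plancherel for the $L^{2/k}$ bound, and the same H\"older interpolation (your $\theta$ is the paper's $1-\theta$). The only cosmetic difference is in the $L^{2/k}$ bound: you apply H\"older ball by ball and then sum over $\ell$ via \eqref{subadd} and discrete H\"older, whereas the paper packages this through the square functions $G_i$ and a single global H\"older inequality.
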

\begin{proof}
Fix a $b$-cube $Q$, a ball $B_\ell$ meeting it in $\Omega$, and one of the $2^k-1$ products in \eqref{Ftan}. In each factor retain only packets meeting $CQ$. The omitted packets are rapidly decreasing on $Q$. Denote the resulting inputs by $h_i$. Their norms satisfy
\[
 \|h_i\|_2\lesssim a_i(Q).
\]
At least one factor is tangent, so Lemma~\ref{Hlemma} and Corollary~\ref{curvedlocal} apply with $\mu=C\alpha_b$. In integral form,
\begin{equation}\label{Qestimate}
 \int_Q\prod_{i=1}^k|E_ih_i|^{\qg}
 \lesssim_\eta b^{\eta\qg}\alpha_b^{\qg}
                  \prod_i a_i(Q)^{\qg}
       +\Rap(R)\Pf^{\qg}.
\end{equation}
The restriction theorem is applied on a ball containing $Q$; its radius differs from $b$ by a dimension-dependent constant.

A $b$-cube meets only boundedly many $\rho$-balls. Sum over cubes, balls, and product choices using \eqref{subadd}, since $\qg\le1$. Apply Lemma~\ref{energysum} with loss $\eta\qg$. Up to a rapidly decreasing error,
\[
 \|F_{\mathrm{tan}}\|_{\qg}^{\qg}
 \lesssim_\eta
 b^{\eta\qg}\alpha_b^{\qg}(R/b)^{\eta\qg}\Pf^{\qg}
 =R^{\eta\qg}\alpha_b^{\qg}\Pf^{\qg}.
\]
Taking the $1/\qg$ power and using
$\alpha_b=\lambda R^{-1/2+2\tau}$ proves \eqref{tangeneric}. Rapidly decreasing errors are absorbed by choosing their decay order sufficiently large.

For \eqref{tanbase}, define
\begin{equation}\label{Gdef}
 G_i(x)^2=
 \sum_{\ell,\sigma_i\in\{\mathrm{tan},\mathrm{nt}\}}
 \one_{B_\ell}(x)|E_ig_{i,\ell,\sigma_i}(x)|^2.
\end{equation}
Bounded overlap and the finite number of product choices give
$F_{\mathrm{tan}}\lesssim\prod_iG_i$. Local Plancherel gives
\[
 \|G_i\|_2^2\lesssim
 \rho\sum_{\ell,\sigma_i}\|g_{i,\ell,\sigma_i}\|_2^2.
\]
A tube of length $CR$ and radius $Cw\ll\rho$ is retained in at most $CR/\rho$ lattice balls. In each retained ball its tangent or nontangent classification is unique. Therefore
\begin{equation}\label{Genergy}
 \|G_i\|_2^2
 \lesssim\rho\sum_Te_{i,T}
       \#\{\ell:T\text{ is retained in }B_\ell\}
 \lesssim R\|f_i\|_2^2.
\end{equation}
H\"older with $k$ factors now gives
\[
 \|F_{\mathrm{tan}}\|_{2/k}
 \lesssim\left(\int\prod_iG_i^{2/k}\right)^{k/2}
 \le\prod_i\|G_i\|_2
 \lesssim R^{k/2}\Pf.
\]
Thus \eqref{tangeneric} and \eqref{tanbase} estimate the same positive function, including its ball-dependent packet selections. One cannot replace this step by an estimate for the unselected original product.

Every selected sum has $L^2$ norm at most $C\|f_i\|_2$. The $L^\infty$ part of Lemma~\ref{plancherel} and bounded overlap prove \eqref{tanLinf}.

Set $\theta=k/(k+1)$. The identities
\begin{equation}\label{interpolationidentity}
 \frac1{\pc}=\frac\theta{\qg}+\frac{1-\theta}{2/k},
 \qquad -\frac\theta2+(1-\theta)\frac k2=0
\end{equation}
imply, by H\"older interpolation,
\begin{align}
 \|F_{\mathrm{tan}}\|_{\pc}
 &\le\|F_{\mathrm{tan}}\|_{\qg}^{\theta}
      \|F_{\mathrm{tan}}\|_{2/k}^{1-\theta}\notag\\
 &\lesssim_\eta
 R^{\theta(-1/2+2\tau+\eta)+(1-\theta)k/2}\Pf
 =R^\sigma\Pf.\label{cancellation}
\end{align}
For $p>\pc$, use
\[
 \int F_{\mathrm{tan}}^p
 \le\|F_{\mathrm{tan}}\|_\infty^{p-\pc}
      \int F_{\mathrm{tan}}^{\pc}.
\]
This gives the stronger loss $R^{\sigma\pc/p}$, which implies \eqref{tanLq}.
\end{proof}

\section{Induction on the radius}\label{inductionsection}

We first specify the support margins. Work on fixed extended coordinate patches $U_i$. Choose $m>0$ so that the original compact supports have distance at least $m$ from their boundaries. For sufficiently large $L$, define $A_p(L)$ using inputs whose supports have distance at least
\begin{equation}\label{margin}
 m-L^{-1/4}
\end{equation}
from the boundaries of $U_i$. Allow every physical centre of the ball. A translation only multiplies the input by a phase of modulus one, preserving its norm and support.

At scale $R$, packet formation enlarges supports by at most $CR^{-1/2}$. Each child radius has the form
\[
 L=R/2,\qquad L=R^{1-\tau},\qquad
 \text{or}\qquad L=C_*R^{1/2+\tau}.
\]
For fixed $C_*$ and sufficiently large $R$,
\[
 CR^{-1/2}\le L^{-1/4}-R^{-1/4}.
\]
Thus all selected inputs are admissible at the child radius. There is only one packet decomposition during the dimension descent, so frequency enlargement is not repeated there. Bounded radii are handled by the $L^\infty$ estimate.

\begin{proposition}\label{recurrenceprop}
For fixed $p$ as in \eqref{pchoice} and sufficiently small fixed $\tau,\eta>0$, there are constants independent of $R$ such that
\begin{align}
 A_p(R)^p\le{}&
 \tfrac14 A_p(R/2)^p+C_*A_p(R^{1-\tau})^p\notag\\
 &+C_*A_p(C_*R^{1/2+\tau})^p
 +C_\eta R^{\sigma p}+\Rap(R),\qquad
 \sigma=\frac{k}{k+1}(2\tau+\eta).\label{recurrence}
\end{align}
\end{proposition}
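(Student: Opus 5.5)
We fix an admissible ball $B_R$ and inputs $f_i$, normalize $\Pf=1$, and bound $M_0=\int_{B_R}\prod_i|E_if_i|^p$ by running the dimension descent of Section~\ref{descentsection}. Since $p>\pc$, every nonterminal configuration has strictly negative cell exponent $e(s,t,p)$. This follows from \eqref{ambientpower}, \eqref{criticalcellpower}, \eqref{lowercellpower}, and the fact that $e$ decreases in $p$ whenever $k(s-1)+t>0$. At each stage we therefore apply Proposition~\ref{onepartition} with a prescribed $c$. The descent has at most $n$ stages, and each stage has at most two outcomes. We choose the constants $c$ at successive stages so that their total is at most $1/4$, choosing each new degree only after the old bounds are fixed, so that the hierarchy stays independent of $R$. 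Every branch then ends in one of three ways:
\begin{itemize}
\item a cellular bound $M_0\le c A_p(R/2)^p+\Rap(R)$;
\item a zero-dimensional stage, giving \eqref{zerodimension};
\item a terminal configuration \eqref{terminalcases}.
\end{itemize}
Since we follow a single branch, determined by which alternative occurs, $M_0$ is bounded by the term of the branch that actually occurs. The total of the $c$'s only ensures the coefficient $\tfrac14$ uniformly over branches.

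In the zero-dimensional case, $b=\lambda r$ with $\lambda$ bounded independently of $R$, so $Cb=C_*R^{1/2+\tau}$ for a suitable fixed $C_*$. The bound \eqref{zerodimension} then gives $M_0\lesssim A_p(C_*R^{1/2+\tau})^p+\Rap(R)$. The implicit constant comes from the bounded number of balls (Bezout), and $K$ is absorbed into $C_*$. In the terminal case, the invariant \eqref{stageinvariant} gives $M_0\le KM+\Rap(R)$. On $\Omega$ the product is pointwise at most $F_{\mathrm{nt}}+F_{\mathrm{tan}}$ plus a packet error. We use $(a+b)^p\le a^p+b^p$, valid since $p<\qg\le1$. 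Lemma~\ref{ntlemma} then gives $\|F_{\mathrm{nt}}\|_p^p\lesssim A_p(R^{1-\tau})^p$, and Proposition~\ref{tanprop} gives $\|F_{\mathrm{tan}}\|_p^p\lesssim_\eta R^{\sigma p}$. The dependence of $K$ and the degree bounds on the finitely many branches is absorbed into $C_*$ and $C_\eta$.

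Admissibility of all child problems is needed to invoke $A_p$ at the radii $R/2$, $R^{1-\tau}$ and $C_*R^{1/2+\tau}$. It follows from the margin discussion preceding the proposition. There is a single packet decomposition at scale $R$, which enlarges supports by $CR^{-1/2}\le L^{-1/4}-R^{-1/4}$. Translating the physical centres only multiplies the inputs by unimodular phases. The packet errors and the errors from covering $B_R$ by $R/2$-balls are polynomially many rapidly decreasing terms, so they combine into $\Rap(R)$. Taking the supremum over admissible data then yields \eqref{recurrence}.

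The main obstacle is the bookkeeping that makes the coefficient of $A_p(R/2)^p$ at most $\tfrac14$ while all other constants stay independent of $R$. The constant $C_{\mathrm{old}}$ in \eqref{cellestimate} depends on the accumulated $K$, $\lambda$ and degree bounds. This is why each $D$ must be chosen after these are fixed, and why the finiteness of the descent tree is essential. A second point to check is that $\sigma$ is uniform, i.e.\ the implicit constant in \eqref{tanLq} does not depend on the terminal pair $(s,t)$ beyond finitely many choices, which holds since only finitely many pairs occur.
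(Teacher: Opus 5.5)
Your proposal is correct and follows essentially the same route as the paper. You run the descent, choose each new degree only after the accumulated bounds are fixed so that the cellular coefficient is at most $\tfrac14$, bound the zero-dimensional and terminal alternatives by \eqref{zerodimension}, Lemma~\ref{ntlemma} and Proposition~\ref{tanprop}, and use the margin calculation for admissibility at the three child radii. One point to phrase carefully is the zero-dimensional radius: $A_p$ is not known to be monotone and $\lambda$ varies between branches, so you should realize $Cb=C_*R^{1/2+\tau}$ by directly enlarging the covering balls to this common radius, as the paper does, rather than comparing $A_p(Cb)$ with $A_p(C_*R^{1/2+\tau})$.
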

\begin{proof}
Begin with arbitrary admissible inputs and their $R$-scale packets. Run the descent of Section~\ref{descentsection}. At a cellular termination choose the new degree in Proposition~\ref{onepartition} so that the accumulated coefficient is at most $1/4$.

If dimension zero is reached, apply \eqref{zerodimension}. If a terminal case in \eqref{terminalcases} is reached, use \eqref{Fnt}--\eqref{Ftan}, subadditivity, Lemma~\ref{ntlemma}, and Proposition~\ref{tanprop}, including the accumulated constant $K$ from \eqref{stageinvariant}.

There are at most $n$ dimension drops. Degrees, neighborhood multipliers, and mass-selection constants can be bounded successively before choosing each new degree. A single constant $C_*$ therefore covers all possible terminations. In a zero-dimensional alternative, enlarge its covering balls directly to the common radius $C_*R^{1/2+\tau}$. The margin calculation above makes the selected inputs admissible at that radius; no monotonicity of $A_p$ is used. The resulting estimates are alternatives, and their sum is also an upper bound. Divide by $\Pf^p$ and take the supremum over inputs and centres.
\end{proof}

\begin{proof}[Proof of the local estimate]
Fix $\pc<p<\qg$ and a desired loss $\varepsilon>0$. Choose $\tau,\eta>0$ with $\sigma<\varepsilon/2$. All geometric constants, packet parameters, and partition degrees are now fixed independently of $R$.

We prove
\begin{equation}\label{pestimate}
 A_p(R)\le C_{\varepsilon,p}R^\varepsilon.
\end{equation}
Inserting the inductive bounds into \eqref{recurrence}, the first term contributes at most $\frac14 2^{-\varepsilon p}$ of $C_{\varepsilon,p}^pR^{\varepsilon p}$. Relative to the same quantity, the second and third terms contribute at most
\[
 C_*R^{-\varepsilon p\tau},
 \qquad
 C_*^{\,1+\varepsilon p}R^{-\varepsilon p(1/2-\tau)}.
\]
Both are below $1/8$ above a fixed threshold. Since $\sigma<\varepsilon/2$, the forcing term contributes at most $1/4$ after increasing the base threshold and constant. The rapidly decreasing error is absorbed as well. This proves \eqref{pestimate} by induction on radius. For sufficiently large $R$, all three child radii are at most $R/2$. The local $L^\infty$ bound gives a finite base constant, and induction over successive dyadic intervals of radii is therefore legitimate, even though $A_p$ need not be monotone.

Put $q=\pc$. Given $\varepsilon>0$, choose $q<p<\qg$ so close to $q$ that
\[
 n\left(\frac1q-\frac1p\right)<\frac{\varepsilon}{2}.
\]
Use \eqref{pestimate} with loss $\varepsilon/2$ and finite-volume H\"older:
\[
 \left\|\prod_iE_if_i\right\|_{L^q(B_R)}
 \le |B_R|^{1/q-1/p}
       \left\|\prod_iE_if_i\right\|_{L^p(B_R)}
 \lesssim_\varepsilon R^\varepsilon\Pf.
\]
Summing the initial fixed frequency subdivision by \eqref{subadd} gives \eqref{localmain}.
\end{proof}

\begin{proof}[Deduction of the global strict range]
Condition \eqref{curv} implies that at least one principal curvature is nonzero at each point. For every smooth compactly supported density,
\begin{equation}\label{decay}
 |E_i\psi(x)|\lesssim_\psi(1+|x|)^{-1/2}.
\end{equation}
To see this, first work on a small patch. If $x/|x|$ is separated from the normal directions, a first derivative of the phase is bounded below and integration by parts applies. Close to a normal direction, choose a tangent coordinate with nonzero second fundamental form. Its second phase derivative has size comparable to $|x|$. One-dimensional van der Corput in that coordinate, followed by integration in the remaining coordinates, gives \eqref{decay}. A finite partition treats every smooth density.

External input~\ref{epsiloninput} applies with $u=\pc>2/k$ and arbitrary local losses. It yields \eqref{globalmain} for every $p>\pc$. It yields no lossless global endpoint estimate.
\end{proof}

\section{The next range of multilinearity}\label{obstructionsection}

There is a useful numerical distinction between the present case and lower levels of multilinearity. In this section only, allow general integers $3\le k\le n-2$ and retain the expected exponent \eqref{generalexponent}. The same cellular calculation at $(s,t)=(k+1,0)$ gives
\begin{equation}\label{obstruction}
 e(k+1,0,p(n,k))
 =k+1-\frac{k^2}{2}\frac{2(n+k)}{k(n+k-2)}
 =\frac{n-k-2}{n+k-2}.
\end{equation}
It is zero for $k=n-2$ and positive for $k\le n-3$. The positive value obstructs contraction in this particular cellular step at the expected exponent. It is not a counterexample to the expected restriction inequality.

For instance, $(n,k)=(6,3)$ gives $p(6,3)=6/7$ and
\[
 e(4,0,6/7)=\frac17.
\]
The four-linear argument in $\R^6$ therefore does not by itself settle the sharp trilinear estimate there. An additional estimate or a different organization of the descent would be needed in that case.

\section{Acknowledgements}
AI tools were used in all aspects of preparation of this article, including discovery, verification, language and editing checks etc.

We are aware of the manuscript \cite{mt}, which claims to solve the problem for all values of $k$ via application of  a general   abstract theorem. While discovery of such an abstract principle with applications to restriction theory would  have been great, a  check with AI produces counterexamples to the general abstract theorem of that  article.

\end{document}